\documentclass[11pt,a4paper]{article}
\usepackage[]{geometry}
\usepackage[mathcal]{euscript}
\usepackage{bm,url}                     %AMS Packages
\usepackage{amsfonts}
\usepackage{amssymb}
\usepackage{amsmath}
\usepackage{amsthm}
\usepackage[utf8x]{inputenc}
\usepackage{float}
\usepackage[pdftex]{graphicx}
\DeclareGraphicsExtensions{.bmp,.png,.pdf,.jpg,.ps}
\usepackage{natbib}                 %Bibliography
\usepackage{colortbl}  
\usepackage{todonotes}             %Tables
\usepackage{booktabs}
%\usepackage{sectsty}
%\bibpunct{[}{]}{;}{n}{,}{,}
%\sectionfont{\normalsize\bfseries}
%\subsectionfont{\normalsize\sf}
%\usepackage{bbold}
\newtheorem{definition}{Definition}
\newtheorem{remark}{Remark}
\newtheorem{theorem}{Theorem}
\newtheorem{lemma}{Lemma}
\newtheorem{proposition}{Proposition}
\newtheorem{corollary}{Corollary}

\newtheorem{notation}[theorem]{Notation}

\usepackage[english]{babel}
\usepackage{amsmath}
\usepackage{amsfonts}
\usepackage{amssymb}
\usepackage{graphicx}
\usepackage{color}
\usepackage{array}
\usepackage{mathrsfs}
\usepackage{hyperref}
%----------------------------------------------------------

\definecolor{violet}{rgb}{0.7,0,0.6}
\definecolor{OliveGreen}{RGB}{85,107,47}
%\counterwithout{figure}{chapter}

\title{Fractional iterated Ornstein-Uhlenbeck Processes}
\author{Juan Kalemkerian\\
Universidad de la República, Facultad de Ciencias.}

\begin{document}
\maketitle
\begin{abstract}
\noindent In this work we present a Gaussian process that arise from the iteration of p
fractional Ornstein-Uhlenbeck processes generated by the same fractional Brownian
motion. This iteration results, when the values of lambdas are pairwise differents, in a particular linear combination of those processes. Although for $H>1/2$ each term of 
the linear combination is a long memory processes, we prove that it results in a short memory 
processes. We include applications to real data that show improvement in predictive performance 
compared with different ARMA models.
\end{abstract}

\noindent \textbf{Keywords: } fractional Brownian motion, fractional Ornstein-Uhlenbeck process, long memory processes.
AMS: 62M10

\newpage

\section{Introduction}

We begin with the following definition of fractional Brownian motion. 
\begin{definition}
A fractional Brownian motion with Hurst parameter $H\in \left( 0,1\right] $,
is an almost surely continuous centered Gaussian process $\left\{
B_{H}(t)\right\} _{t\in \mathbb{R}}$ with 
\begin{equation*}
\mathbb{E}\left( B_{H}(t)B_{H}(s)\right) =\frac{1}{2}\left( \left\vert
t\right\vert ^{2H}+\left\vert s\right\vert ^{2H}-\left\vert t-s\right\vert
^{2H}\right) ,\text{ \ }t,s\in \mathbb{R}.
\end{equation*}
\end{definition}

When $H=1/2,$ fractional Brownian motion become in a standard Brownian
motion.  An Ornstein-Uhlenbeck process is a
Gaussian process defined by $X_{t}=\sigma \int_{-\infty }^{t}e^{-\lambda
\left( t-s\right) }dB_{1/2}(t)$ for $t\in \mathbb{R},$ where $\sigma
,\lambda >0,$ are parameters (Ornstein \& Uhlenbeck, 1930). This process is the unique stationary solution of the
Langevin equation (Langevin, 1908), defined by \[dX_{t}=-\lambda
X_{t}+\sigma dB_{1/2}(t).\] 
If we consider the Langevin equation with a fractional Brownian
motion, this is $dX_{t}=-\lambda X_{t}+\sigma dB_{H}(t),$ then $X_{t}=\sigma
\int_{-\infty }^{t}e^{-\lambda \left( t-s\right) }dB_{H}(t)$ for $t\in 
\mathbb{R}$ is the unique stationary solution (Cheridito et al, 2003). In this work, we use the
notation $\left\{ X_{t}\right\} _{t\in \mathbb{R}}\sim $FOU$\left( \lambda
,\sigma ,H\right) $, for any process defined as $X_{t}=\sigma \int_{-\infty
}^{t}e^{-\lambda \left( t-s\right) }dB_{H}(t)$, where $\sigma ,\lambda >0,$ $%
H\in \left( 0,1\right] $.\\ If we change the process $\left\{ B_{H}(t)\right\}
_{t\in \mathbb{R}}$ by another $\left\{ y(t)\right\} _{t\in \mathbb{R}}$ we
can define the operators $T_{\lambda }(y)(t):=\int_{-\infty }^{t}e^{-\lambda
(t-s)}dy(s)$ and$\ $for each $h=0,1,2,...$

\begin{equation}
T_{\lambda }^{(h)}(y)(t):=\int_{-\infty }^{t}e^{-\lambda (t-s)}\frac{\left(
-\lambda \left( t-s\right) \right) ^{h}}{h!}dy(s).  \label{hh}
\end{equation}

These transformations are called, $OU$
operator with parameter $\lambda $ and $OU$ operator of degree $h$ and
parameter $\lambda $ respectively (Arratia et al, 2016).

Observe that $T_{\lambda }^{(0)}=T_{\lambda }.$

Given $\left\{ B_{H}(s)\right\} _{s\in \mathbb{R}}$  a fractional Brownian
motion with parameter $H$, and $\lambda _{1}\neq \lambda _{2}$ are real positive numbers, we define
the processes \\$%
X_{t}^{(i)}:=T_{\lambda _{i}}\left(\sigma B_{H}\right) (t)= \sigma \int_{-\infty
}^{t}e^{-\lambda _{i}(t-s)}dB_{H}(s)$ for $i=1,2.$ This is \\
$\left\{ X_{t}^{(i)}\right\} _{t\in \mathbb{R}}\sim $FOU$\left( \lambda_{i}
,\sigma ,H\right) $ for $i=1,2$ generated by the same fractional Brownian motion.
It can be proved that the process defined as $X_{t}:=\left( T_{\lambda
_{1}}\circ T_{\lambda _{2}}\right) \left( B_{H}\right) (t)$ is equal to $X_{t}=%
\frac{\lambda _{1}}{\lambda _{1}-\lambda _{2}}X_{t}^{\left( 1\right) }+\frac{%
\lambda _{2}}{\lambda _{2}-\lambda _{1}}X_{t}^{\left( 2\right) },$ this is a
particular linear combination of process $\left\{ X_{t}^{(1)}\right\}
_{t\in \mathbb{R}}$ and $\left\{ X_{t}^{(2)}\right\} _{t\in \mathbb{R}}.$

This implies that $\left( T_{\lambda _{1}}\circ T_{\lambda
_{2}}\right) \left( B_{H}\right) =\left( T_{\lambda _{2}}\circ T_{\lambda
_{1}}\right) \left( B_{H}\right) $.

In general,  if we compose 
$p$ times the operator $T_{\lambda }$ result in the following equality: $%
T_{\lambda }^{p}=\sum_{j=0}^{p-1}\binom{p-1}{j} T_{\lambda
}^{\left( j\right) }.$ And if we compose $p_{1}$ times the operator $%
T_{\lambda _{1}},$ $p_{2}$ times $T_{\lambda _{2}},...,$ and $p_{q}$ times
the operator $T_{\lambda _{q}},$ for $\lambda _{i}$ pairwise different, we have  

\begin{equation*}
\prod\limits_{i=1}^{q}T_{\lambda _{i}}^{p_{i}}=\sum_{i=1}^{q}K_{i}\left(
\lambda \right) T_{\lambda _{i}}^{p_{i}}=\sum_{i=1}^{q}K_{i}\left( \lambda
\right) \sum_{j=0}^{p_{i}-1}\binom{p_{i}-1}{j} T_{\lambda
_{i}}^{\left( j\right) }
\end{equation*}%
where $%
p=p_{1}+p_{2}+...+p_{q}$ and 

\begin{equation}
K_{i}\left( \lambda \right) =K_{i}\left( \lambda _{1},\lambda
_{2},...,\lambda _{q}\right) =\frac{1}{\prod\limits_{j\neq i}\left(
1-\lambda _{j}/\lambda _{i}\right) }  \label{k_i}
\end{equation}

(Arratia et al 2016).

It is known that for $H>1/2$ every FOU$\left( \lambda ,\sigma ,H\right) $ is a long
memory process (Cheridito et al, 2013), this is $\sum_{n=-\infty }^{+\infty }\left\vert \gamma
\left( n\right) \right\vert =+\infty $ where $\gamma \left( n\right) =%
\mathbb{E}\left( X_{0}X_{n}\right) .$ In this work we prove in section 2 that if we
compose at least two operators of the form $T_{\lambda }$ evaluated in a
fractional Brownian motion, with Hurst parameter $H>1/2$, we obtain a
process $\left\{ X_{t}\right\} _{t\in \mathbb{R}}$ such that $%
\sum_{n=-\infty }^{+\infty }\left\vert \mathbb{E}\left( X_{0}X_{n}\right)
\right\vert <+\infty.$  Furhter, the process obtained
has short memory. In section 2, we define a FOU$\left(
p\right) $ processes, and summarize the results needed to obtain the auto-covariance function. 
We also obtain its spectral
density and deduce that in the case in wich $p\geq 2$ it is a short memory
process. In section 3, we apply
these models to  real data sets and compare the performance of these
models with ARMA models according to their predictive power. In section 4, we make 
the demonstration of the results established in section 2. Our concluding remarks are in
section 5.

\section{Definitions and properties}

\bigskip We start with the definition of the  fractional iterated Ornstein-Uhlenbeck process. 

\begin{definition}
If $\left\{ \sigma B_{H}(s)\right\} _{s\in \mathbb{R}}$ is a fractional
Brownian motion with Hurst parameter $H,$ and escale parameter $\sigma ,$
and a pairwise different real positive numbers $\lambda _{1},\lambda
_{2},...,\lambda _{q}$ and $p_{1},p_{2},...,p_{q}\in \mathbb{N}$ such that $%
p_{1}+p_{2}+...+p_{q}=p$, we define $\left\{ X_{t}\right\} _{t\in \mathbb{R}%
} $ by 
\begin{equation*}
X_{t}:=\prod\limits_{i=1}^{q}T_{\lambda _{i}}^{p_{i}}(\sigma
B_{H})(t)=\sum_{i=1}^{q}K_{i}\left( \lambda \right)
\sum_{j=0}^{p_{i}-1}\binom{p_{i}-1}{j} T_{\lambda _{i}}^{\left(
j\right) }(\sigma B_{H})(t)
\end{equation*}%
where the numbers $K_{i}\left( \lambda \right) $ and the operators $%
T_{\lambda _{i}}^{\left( j\right) }$ were defined in (\ref{k_i}) and (\ref%
{hh}) respectively.
\end{definition}

\begin{notation}
$\left\{ X_{t}\right\} _{t\in \mathbb{R}}\sim FOU\left( \lambda _{1}^{\left(
p_{1}\right) },\lambda _{2}^{\left( p_{2}\right) },...,\lambda _{q}^{\left(
p_{q}\right) },\sigma ,H\right), $ or more simply
$\left\{ X_{t}\right\} _{t\in \mathbb{R}}\sim$FOU$(p)$.

\end{notation}

Observe that the notation FOU$\left( \lambda _{1}^{\left( p_{1}\right)
},\lambda _{2}^{\left( p_{2}\right) },...,\lambda _{q}^{\left( p_{q}\right)
},\sigma ,H\right) $ implies that the $\lambda _{i}$ parameters are pairwise
different.

\begin{remark}
When $p_{1}=p_{2}=...=p_{q}=1$ \ the process is equal to
\end{remark}

\begin{equation*}
X_{t}=\prod\limits_{i=1}^{q}T_{\lambda _{i}}(\sigma
B_{H})(t)=\sum_{i=1}^{q}K_{i}\left( \lambda \right) T_{\lambda _{i}}(\sigma
B_{H})(t)
\end{equation*}%
and we call $\left\{ X_{t}\right\} _{t\in \mathbb{R}}\sim $FOU$\left(
\lambda _{1},\lambda _{2},...,\lambda _{q},\sigma ,H\right) .$

\begin{remark}
When $p=1$, we obtain a fractional Ornstein-Uhlenbeck process (FOU$\left(
\lambda ,\sigma ,H\right) $).
\end{remark}

\begin{remark}
Any FOU$\left( \lambda _{1}^{\left( p_{1}\right) },\lambda _{2}^{\left(
p_{2}\right) },...,\lambda _{q}^{\left( p_{q}\right) },\sigma ,H\right) $,
is Gaussian, centered and almost surely continuous process.
\end{remark}

\bigskip Now, we compute the auto-covariance function of any FOU$\left( p\right) $
process. For this we need the following formula, whose proof can be seen in  (Pipiras \& Taqqu, 2000): 
if $H\in \left( 1/2,1\right) $ and \[f,g\in \left\{ f:\mathbb{R\rightarrow R}\text{: }\int \int_{\mathbb{R}%
^{2}}\left\vert f(u)f(v)\right\vert \left\vert u-v\right\vert
^{2H-2}dudv<+\infty \right\}, \] then 
\begin{equation}
\mathbb{E}\left( \int_{-\infty }^{+\infty }f(u)dB_{H}(u)\int_{-\infty
}^{+\infty }g(v)dB_{H}(v)\right) =  \label{pipiras}
\end{equation}%
\begin{equation*}
H(2H-1)\int_{-\infty }^{+\infty }f(u)du\int_{-\infty }^{+\infty
}g(v)\left\vert u-v\right\vert ^{2H-2}dv.
\end{equation*}

We start with $\gamma (t)=\mathbb{E}\left( X_{t}X_{0}\right) =$

\begin{equation*}
\mathbb{E}\sum_{h=1}^{q}K_{h}\left( \lambda \right)
\sum_{j=0}^{p_{h}-1}\binom{p_{h}-1}{j} T_{\lambda _{h}}^{\left(
j\right) }(\sigma B_{H})(t)\sum_{h^{\prime }=1}^{q}K_{h^{\prime }}\left(
\lambda \right) \sum_{j^{\prime }=0}^{p_{h^{\prime }}-1}\binom{p_{h^{\prime }}-1}{j^{\prime }} T_{\lambda _{h^{\prime }}}^{\left( j^{\prime
}\right) }(\sigma B_{H})(0)=
\end{equation*}

\begin{equation*}
\sum_{i=1}^{q}\sum_{i^{\prime }=1}^{q}\sum_{j=0}^{p_{i}-1}\sum_{j^{\prime
}=0}^{p_{i^{\prime }}-1}K_{i}\left( \lambda \right) \binom{p_{i}-1}{j} K_{i^{\prime }}\left( \lambda \right) \binom
{p_{i^{\prime }}-1}{j^{\prime }} \mathbb{E}T_{\lambda _{i}}^{\left(
j\right) }(\sigma B_{H})(t)T_{\lambda _{i^{\prime }}}^{\left( j^{\prime
}\right) }(\sigma B_{H})(0).
\end{equation*}

Define $\gamma _{\lambda ,\lambda ^{\prime }}^{\left( j,j^{\prime }\right)
}(t):=\mathbb{E}T_{\lambda }^{\left( j\right) }(\sigma B_{H})(t)T_{\lambda
^{\prime }}^{\left( j^{\prime }\right) }(\sigma B_{H})(0),$ then

\begin{equation}
\gamma (t)=\sum_{i,i^{\prime }=1}^{q}K_{i}\left( \lambda \right)
K_{i^{\prime }}\left( \lambda \right) \sum_{j=0}^{p_{i}-1}\sum_{j^{\prime
}=0}^{p_{i^{\prime }}-1}\binom{p_{i}-1}{j} \binom {p_{i^{\prime }}-1}{j^{\prime
}} \gamma _{\lambda _{i},\lambda _{i^{\prime
}}}^{\left( j,j^{\prime }\right) }(t).  \label{covgeneral}
\end{equation}

Now, we compute $\gamma _{\lambda ,\lambda ^{\prime }}^{\left( j,j^{\prime
}\right) }(t).$

\begin{equation*}
\bigskip \gamma _{\lambda ,\lambda ^{\prime }}^{\left( j,j^{\prime }\right)
}(t)=\mathbb{E}T_{\lambda }^{\left( j\right) }(\sigma B_{H})(t)T_{\lambda
^{\prime }}^{\left( j^{\prime }\right) }(\sigma B_{H})(0)=
\end{equation*}

\begin{equation}
\sigma ^{2}\mathbb{E}\int_{-\infty }^{t}e^{-\lambda (t-u)}\frac{\left(
-\lambda \left( t-u\right) \right) ^{j}}{j!}dB_{H}(u)\int_{-\infty
}^{0}e^{\lambda ^{\prime }v}\frac{\lambda ^{\prime j^{\prime }}v^{j^{\prime
}}}{j^{\prime }!}dB_{H}(v).  \label{7}
\end{equation}

Using (\ref{pipiras}), we obtain that \ (\ref{7}) is equal to

\begin{equation*}
\sigma ^{2}H(2H-1)\int_{-\infty }^{t}e^{-\lambda (t-u)}\frac{\left( -\lambda
\left( t-u\right) \right) ^{j}}{j!}du\int_{-\infty }^{0}e^{\lambda ^{\prime
}v}\frac{\lambda ^{\prime j^{\prime }}v^{j^{\prime }}}{j^{\prime }!}%
\left\vert u-v\right\vert ^{2H-2}dv=
\end{equation*}

\begin{equation*}
\frac{\sigma ^{2}H(2H-1)\lambda ^{j}\lambda ^{\prime j^{\prime }}}{%
j!j^{\prime }!}\int_{-\infty }^{t}e^{-\lambda (t-u)}\left( u-t\right)
^{j}du\int_{-\infty }^{0}e^{\lambda ^{\prime }v}v^{j^{\prime }}\left\vert
u-v\right\vert ^{2H-2}dv\overset{w=u-t}{=}
\end{equation*}%
\begin{equation*}
\frac{\sigma ^{2}H(2H-1)\lambda ^{j}\lambda ^{\prime j^{\prime }}}{%
j!j^{\prime }!}\int_{-\infty }^{0}e^{\lambda w}w^{j}dw\int_{-\infty
}^{0}e^{\lambda ^{\prime }v}v^{j^{\prime }}\left\vert w+t-v\right\vert
^{2H-2}dv=
\end{equation*}%
\begin{equation}
\frac{\sigma ^{2}H(2H-1)\lambda ^{j}\lambda ^{\prime j^{\prime }}\left(
-1\right) ^{j+j^{\prime }}}{j!j^{\prime }!}\int_{0}^{+\infty }e^{-\lambda
w}w^{j}dw\int_{0}^{+\infty }e^{-\lambda ^{\prime }v}v^{j^{\prime
}}\left\vert v+t-w\right\vert ^{2H-2}dv.  \label{gamma}
\end{equation}

To obtain the results in this work, we need to define the following
functions:%
\begin{equation}
f_{H}^{\left( 1\right) }(x):=e^{-x}\left( \Gamma \left( 2H\right)
-\int_{0}^{x}e^{s}s^{2H-1}ds\right) ,  \label{fH1}
\end{equation}

\begin{equation}
f_{H}^{\left( 2\right) }(x):=e^{x}\left( \Gamma \left( 2H\right)
-\int_{0}^{x}e^{-s}s^{2H-1}ds\right) ,  \label{fH2}
\end{equation}%
\begin{equation*}
f_{H}(x):=f_{H}^{\left( 1\right) }(x)+f_{H}^{\left( 2\right) }(x).
\end{equation*}%
Then, 
\begin{equation}
f_{H}(x):=e^{-x}\left( \Gamma \left( 2H\right)
-\int_{0}^{x}e^{s}s^{2H-1}ds\right) +e^{x}\left( \Gamma \left( 2H\right)
-\int_{0}^{x}e^{-s}s^{2H-1}ds\right) .  \label{fH}
\end{equation}

\begin{remark}
For $H>1/2,$ is verified that $f_{H}^{\left( 1\right) }(x)\rightarrow
+\infty $ and $f_{H}^{\left( 2\right) }(x)\rightarrow -\infty $ when $%
x\rightarrow +\infty.$
\end{remark}

As $H$ increases, the functions $f_{H}$ increases as can be seen in Figure \ref{f_H}. 
Then, when $x \rightarrow +\infty$, it is verified that as $H$ increases the functions $f_{H}$ goes to zero more slowly.
The following proposition includes properties of $f_{H}$ that will be used
later.
We denote $f \sim g$ for $x \rightarrow a$, when $f(x)/g(x) \rightarrow 1$ for $x \rightarrow a$.

\begin{proposition}
If $H>1/2,$ $\alpha ,\beta >0,$ then
\end{proposition}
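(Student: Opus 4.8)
The plan is to reduce both building blocks $f_{H}^{(1)}$ and $f_{H}^{(2)}$ to incomplete-gamma integrals and then extract every stated property by elementary asymptotics together with a first-order ODE. First I would record the representations that make the definitions usable. Since $\int_{0}^{\infty}e^{-s}s^{2H-1}\,ds=\Gamma(2H)$, the substitution $s=x+u$ turns the second function into a genuine tail integral,
\begin{equation*}
f_{H}^{(2)}(x)=e^{x}\int_{x}^{\infty}e^{-s}s^{2H-1}\,ds=\int_{0}^{\infty}e^{-u}(x+u)^{2H-1}\,du ,
\end{equation*}
which is manifestly positive, while the substitution $s=x-t$ gives the bounded form $f_{H}^{(1)}(x)=e^{-x}\Gamma(2H)-\int_{0}^{x}e^{-t}(x-t)^{2H-1}\,dt$. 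I would then differentiate the definitions to obtain the linear ODEs $(f_{H}^{(1)})'(x)+f_{H}^{(1)}(x)=-x^{2H-1}$ and $(f_{H}^{(2)})'(x)-f_{H}^{(2)}(x)=-x^{2H-1}$, a convenient tool when integrating these functions against the exponential weights $e^{-\alpha w}w^{j}$ that appear in (\ref{gamma}).

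For any assertion involving the two rate parameters $\alpha,\beta>0$, the route is Fubini followed by a splitting of the quadrant $\{v,w>0\}$ along the diagonal where the argument of $\left\vert\,\cdot\,\right\vert^{2H-2}$ vanishes. On the region where the inner variable exceeds the shift, the shift-substitution produces a factor $\beta^{-(2H-1)}\Gamma(2H-1)$ (respectively $\alpha^{-(2H-1)}\Gamma(2H-1)$, finite precisely because $H>1/2$) times a pure exponential, which integrates at once; on the complementary region one recognises exactly the incomplete-gamma primitive appearing in $f_{H}^{(1)}$ or $f_{H}^{(2)}$. Matching the two pieces against the definitions (\ref{fH1})--(\ref{fH}) is what expresses the $\alpha,\beta$-dependent quantities through $f_{H}$.

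The analytic heart is the large-$x$ behaviour, and this is where I expect the real work. Expanding $(x+u)^{2H-1}=x^{2H-1}\sum_{k\ge0}\binom{2H-1}{k}(u/x)^{k}$ and applying Watson's lemma gives $f_{H}^{(2)}(x)\sim\sum_{k\ge0}\binom{2H-1}{k}k!\,x^{2H-1-k}$; the same expansion applied to $\int_{0}^{x}e^{-t}(x-t)^{2H-1}\,dt$ produces the identical series with an extra $(-1)^{k}$, so that $f_{H}^{(1)}(x)\sim-\sum_{k\ge0}(-1)^{k}\binom{2H-1}{k}k!\,x^{2H-1-k}$ up to an exponentially small term. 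Adding the two series, every even-order coefficient cancels and every odd-order coefficient doubles; in particular the dominant $x^{2H-1}$ contributions annihilate and one is left with
\begin{equation*}
f_{H}(x)\sim 2(2H-1)\,x^{2H-2},\qquad x\to+\infty .
\end{equation*}
Since $2H-1>0$ the surviving constant is positive and the exponent $2H-2\in(-1,0)$ increases with $H$, which gives both the sign and the ``slower decay as $H$ grows'' phenomenon noted before the statement. The obstacle is rigour rather than discovery: in the $f_{H}^{(1)}$ integral the upper limit is $x$, not $+\infty$, so $(1-t/x)^{2H-1}$ is only expandable on $[0,x]$ and Watson's lemma cannot be invoked verbatim. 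I would control the remainder by truncating the binomial expansion with an explicit error term and using $\int_{x}^{\infty}e^{-t}t^{k}\,dt=O(e^{-x}x^{k})$ to discard the tail.

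Finally, positivity of $f_{H}$ on $(0,\infty)$, the monotonicity in $H$, and any boundary values follow by combining these ingredients: $f_{H}(0)=2\Gamma(2H)$ directly from the definitions, positivity from the tail representation of $f_{H}^{(2)}$ together with the asymptotic sign dominating the negative contribution of $f_{H}^{(1)}$, and the $H$-monotonicity by differentiating the leading coefficient $2(2H-1)x^{2H-2}$ in $H$. Once the remainder estimates of the previous paragraph are in place, each individual claim reduces to a short computation.
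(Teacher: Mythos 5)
Your proposal genuinely covers only the large-$x$ asymptotics, i.e.\ part of the proposition. The representations you start from are correct: $f_{H}^{(2)}(x)=e^{x}\int_{x}^{\infty}e^{-s}s^{2H-1}\,ds=\int_{0}^{\infty}e^{-u}(x+u)^{2H-1}\,du$ and $f_{H}^{(1)}(x)=e^{-x}\Gamma(2H)-\int_{0}^{x}e^{-t}(x-t)^{2H-1}\,dt$, and the parity cancellation between the two expansions does give property 3. In fact the same expansions applied to $\alpha^{1-2H}f_{H}^{(1)}(\alpha x)+\beta^{1-2H}f_{H}^{(2)}(\beta x)$ produce the coefficient $\binom{2H-1}{k}k!\,\bigl(\beta^{-k}-(-1)^{k}\alpha^{-k}\bigr)$ at order $x^{2H-1-k}$, whose $k=0$ term vanishes and whose $k=1$ term is $(2H-1)(\alpha+\beta)/(\alpha\beta)$, so properties 1 and 2 are also within reach of your method --- but you misattribute them: your ``Fubini plus quadrant splitting'' paragraph describes how covariances get expressed through $f_{H}^{(1)},f_{H}^{(2)}$ (that is the proof of Proposition 2), not how these purely deterministic asymptotic statements are proved. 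Where you do prove something, your route differs from the paper's, which just applies L'H\^{o}pital's rule twice; yours yields a full asymptotic series at the cost of the remainder control you rightly flag as the delicate point.

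The genuine gaps are properties 4 and 5. For property 4 you only record $f_{H}(0)=2\Gamma(2H)$, but the claim is the precise expansion $f_{H}(x)-2\Gamma(2H)=-x^{2H}/H+o(x^{2H})$ as $x\to 0$; the paper gets it by writing $f_{H}(x)-2\Gamma(2H)=\Gamma(2H)(e^{x}+e^{-x}-2)-e^{-x}\int_{0}^{x}e^{s}s^{2H-1}ds-e^{x}\int_{0}^{x}e^{-s}s^{2H-1}ds$, observing that $e^{x}+e^{-x}-2=O(x^{2})=o(x^{2H})$ since $2H<2$, and extracting $x^{2H}/(2H)$ from each integral by term-by-term integration of the power series; nothing in your proposal produces the constant $-1/H$. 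Property 5, the representation $f_{H}(x)=\frac{\Gamma(2H+1)\sin(H\pi)}{2\pi}\int_{-\infty}^{+\infty}\frac{e^{iv}|v|^{1-2H}}{v^{2}+x^{2}}\,dv$, is absent from your proposal altogether, and it does not follow from the ODEs and Watson-type expansions you set up: the paper derives it by equating two expressions for the FOU covariance --- the Pipiras--Taqqu spectral formula and (\ref{covfou}) --- and then rescaling $x=\lambda v$. Since property 5 is exactly what feeds the spectral-density Theorem, omitting it leaves the proposition's most consequential claim unproved. (Also, the positivity and $H$-monotonicity you address in your closing paragraph are not among the five assertions to be proved.)
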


\begin{enumerate}
\item $\alpha ^{1-2H}f_{H}^{\left( 1\right) }(\alpha x)+\beta
^{1-2H}f_{H}^{\left( 2\right) }(\beta x)\rightarrow 0$ when $x\rightarrow
+\infty .$

\item $\alpha ^{1-2H}f_{H}^{\left( 1\right) }(\alpha x)+\beta
^{1-2H}f_{H}^{\left( 2\right) }(\beta x)\sim \frac{\alpha +\beta }{\alpha
\beta }\left( 2H-1\right) x^{2H-2}$ when $x\rightarrow +\infty .$

\item $f_{H}(x)\sim 2(2H-1)x^{2H-2}$\ when $x\rightarrow +\infty .$

\item $f_{H}(x)-f_{H}(0)=f_{H}(x)-2\Gamma \left( 2H\right) =-\frac{x^{2H}}{H}%
+o\left( x^{2H}\right) $ when $x\rightarrow 0.$

\item $f_{H}(x)=\frac{\Gamma \left( 2H+1\right) \sin \left( H\pi \right) }{%
2\pi }\int_{-\infty }^{+\infty }\frac{e^{iv}\left\vert v\right\vert ^{1-2H}}{%
v^{2}+x^{2}}dv.$
\end{enumerate}
\begin{figure}[H]
 \centering
   \includegraphics[scale=0.43]{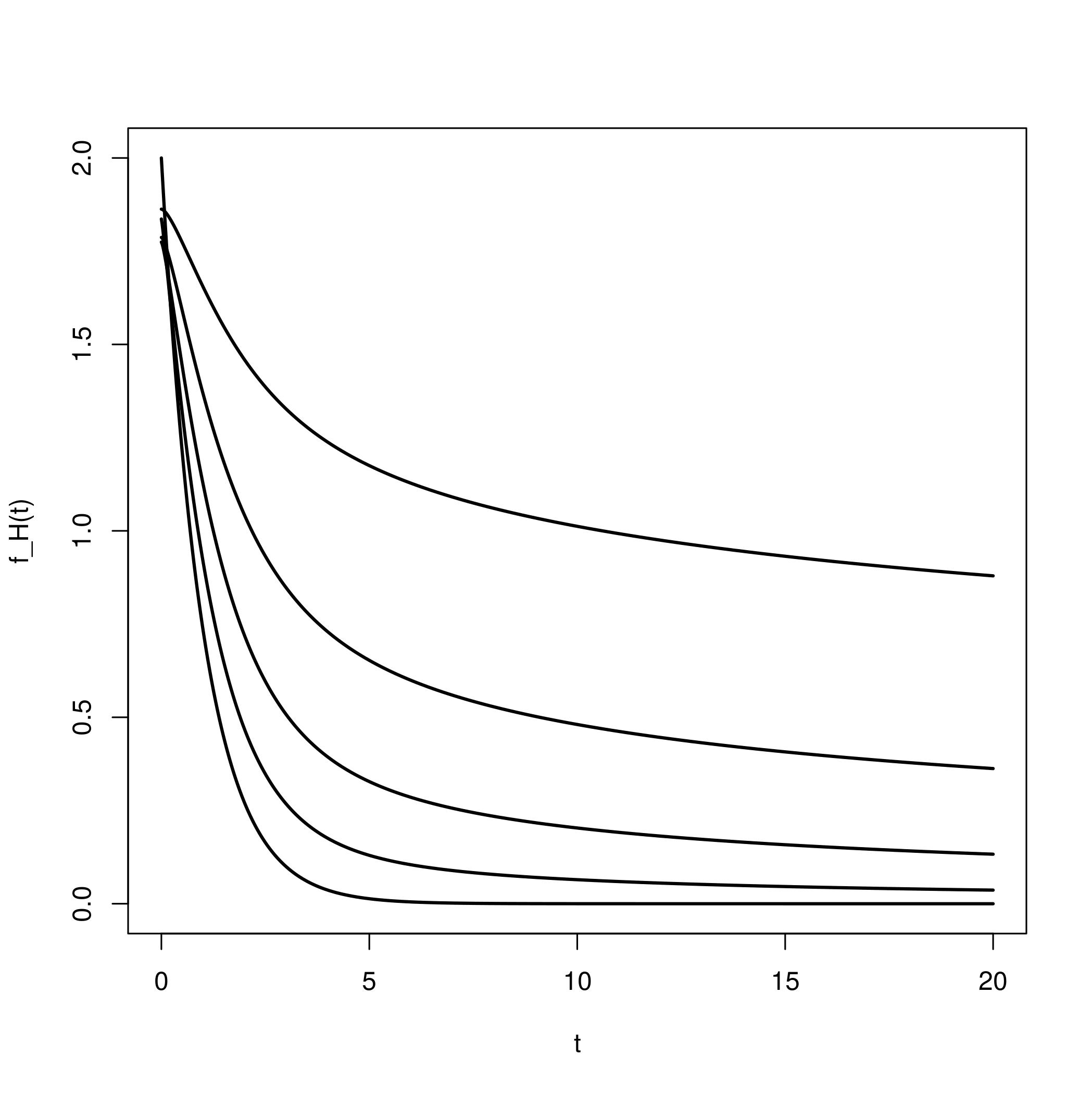} 

   \caption{$f_{H}$ functions for $H=0.5,\ \  0.6, \ \ 0.7, \ \ 0.8$ and$, 0.9$ values. The lowest curve corresponds
   to $H=0.5$ while the highest curve corresponds to $H=0.9$.} 
  \label{f_H}
\end{figure}

Property 1 show us that when $H>1/2$, $f_H(x) \rightarrow 0$  as $x\rightarrow +\infty$, also will be used to prove property 3.
Property 3 show us that when $H>1/2$, then $\sum_{n=1}^{+\infty}f_H(n)=+\infty$. In Figure \ref{f_H} it is perceived the 
slow decrease to zero of function $f_H$ as $H$ increases.
We will prove later in (\ref{covfou}) that the auto-covariance function of any FOU$(\lambda,\sigma,H)$ can be expressed as a multiple 
of $f_H(\lambda t)$, therefore, any FOU$(\lambda,\sigma,H)$ is a long memory process for $H>1/2$.
Property 5 will be used to obtain the spectral density of any FOU$(p)$ process.

The following proposition, is the key that will allows us to express \ the
auto-covariance function of any FOU$\left( \lambda _{1},\lambda
_{2},...,\lambda _{p},\sigma ,H\right) $ as a linear combination of $%
f_{H}\left( \lambda _{i}t\right) .$ The proof it is based on (\ref{pipiras}).

\begin{proposition}
Let $\left\{ X_{t}^{(1)}\right\} _{t\in \mathbb{R}}\sim $FOU$\left( \lambda
_{1},\sigma ,H\right) $\ and \\$\left\{ X_{t}^{(2)}\right\} _{t\in \mathbb{R}%
}\sim $FOU$\left( \lambda _{2},\sigma ,H\right) $\ are generated by the same
fractional Brownian motion $\left\{ \sigma B_{H}(t)\right\} _{t\in \mathbb{R}%
}.$ Then, for all $t$ and $H>1/2$ it is verified that 
\begin{equation}
\mathbb{E}\left( X_{0}^{(1)}X_{t}^{(2)}\right) =\frac{\sigma ^{2}H}{\lambda
_{1}+\lambda _{2}}\left( \lambda _{1}^{1-2H}f_{H}^{\left( 1\right) }(\lambda
_{1}\left\vert t\right\vert )+\lambda _{2}^{1-2H}f_{H}^{\left( 2\right)
}(\lambda _{2}\left\vert t\right\vert )\right) .  \label{cov12}
\end{equation}
\end{proposition}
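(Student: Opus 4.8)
The plan is to write both factors as Wiener--It\^o integrals against $B_H$ and apply the isometry (\ref{pipiras}). Since $X_0^{(1)}=\sigma\int_{-\infty}^0 e^{\lambda_1 s}\,dB_H(s)$ and $X_t^{(2)}=\sigma\int_{-\infty}^t e^{-\lambda_2(t-u)}\,dB_H(u)$, taking $f(s)=e^{\lambda_1 s}\ind_{\{s\le 0\}}$ and $g(u)=e^{-\lambda_2(t-u)}\ind_{\{u\le t\}}$ in (\ref{pipiras}) collapses the covariance to $\sigma^2 H(2H-1)$ times a double integral with the singular kernel $|s-u|^{2H-2}$. The integrability hypothesis of (\ref{pipiras}) holds precisely because $H>1/2$, and this same restriction is what makes the gamma-type integrals below converge (in particular $\Gamma(2H-1)$), so it is structurally why the statement needs $H>1/2$.

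First I would fix $t\ge 0$ and normalize the domain. The substitutions $s=-w$, $u=t-v$ send both integrals to $[0,+\infty)$ and turn the kernel into $|v-w-t|^{2H-2}$, so the covariance equals $\sigma^2 H(2H-1)\,I(t)$ with $I(t):=\int_0^{\infty}\int_0^{\infty}e^{-\lambda_1 w}e^{-\lambda_2 v}|v-w-t|^{2H-2}\,dv\,dw$; this is exactly the $j=j'=0$ instance of the quantity already reduced in (\ref{gamma}). I would then evaluate the inner $v$-integral by splitting the absolute value at $v=w+t$: the tail $v>w+t$ substitutes to $\lambda_2^{1-2H}\Gamma(2H-1)e^{-\lambda_2(w+t)}$, while the part $0<v<w+t$ gives $e^{-\lambda_2(w+t)}\int_0^{w+t}e^{\lambda_2 r}r^{2H-2}\,dr$. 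Interchanging the order of integration and integrating out $w$ leaves three pieces, all divided by $\lambda_1+\lambda_2$: a multiple of $e^{-\lambda_2 t}\Gamma(2H-1)$, a multiple of $e^{-\lambda_2 t}\int_0^t e^{\lambda_2 r}r^{2H-2}\,dr$, and a multiple of $e^{\lambda_1 t}\int_t^{\infty} e^{-\lambda_1 r}r^{2H-2}\,dr$.

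The decisive step is to raise the exponent of $r$ from $2H-2$ to $2H-1$ by an integration by parts in each of the two remaining integrals, and to reconcile constants through $\Gamma(2H)=(2H-1)\Gamma(2H-1)$. After this, the two $\lambda_2$-flavored terms combine into $\tfrac{1}{2H-1}\lambda_2^{1-2H}f_H^{(1)}(\lambda_2 t)+\tfrac{t^{2H-1}}{2H-1}$ and the $\lambda_1$-term into $\tfrac{1}{2H-1}\lambda_1^{1-2H}f_H^{(2)}(\lambda_1 t)-\tfrac{t^{2H-1}}{2H-1}$, where $f_H^{(1)},f_H^{(2)}$ are read off directly from (\ref{fH1})--(\ref{fH2}). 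The two spurious boundary terms $\pm t^{2H-1}/(2H-1)$ cancel, and the prefactor $\sigma^2 H(2H-1)$ absorbs the $1/(2H-1)$, producing a closed form of the stated shape for $t\ge 0$.

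Finally, for $t<0$ I would not recompute but invoke joint stationarity, $\mathbb{E}(X_0^{(1)}X_t^{(2)})=\mathbb{E}(X_0^{(2)}X_{-t}^{(1)})$, which is the $t>0$ expression with the pairs $(\lambda_1,f_H^{(1)})$ and $(\lambda_2,f_H^{(2)})$ interchanged; combining the two sign regimes is exactly what the $|t|$ in the statement encodes and what pins the assignment of $f_H^{(1)}$ versus $f_H^{(2)}$ to each $\lambda_i$. I expect the main obstacle to be the reduction of the singular double integral: keeping the case split at $v=w+t$ honest, justifying the interchange of the order of integration, and carrying out the two integrations by parts so that the constants (via $\Gamma(2H)=(2H-1)\Gamma(2H-1)$) and the $t^{2H-1}$ boundary terms line up precisely with (\ref{fH1})--(\ref{fH2}). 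A secondary but genuine delicacy is that a cross-covariance of jointly stationary processes need not be even, so the bookkeeping of which of $f_H^{(1)},f_H^{(2)}$ attaches to $\lambda_1$ or $\lambda_2$ must be tracked with the sign of $t$; everything else is routine.
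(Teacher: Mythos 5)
Your proof is correct, and it follows the paper's overall strategy (the Pipiras--Taqqu isometry (\ref{pipiras}), reduction to the double integral $\int_0^\infty\!\int_0^\infty e^{-\lambda_1 w-\lambda_2 v}|v-w-t|^{2H-2}\,dv\,dw$, case analysis on the sign of $t$), but the middle of the argument is genuinely different. The paper substitutes $h=\lambda_1 w+\lambda_2 z$ and swaps the order of integration so that the \emph{inner} integral has a pure power integrand, which can be antidifferentiated directly to exponent $2H-1$ over three zones determined by the sign of $h-(\lambda_1+\lambda_2)w-\lambda_2 t$; you instead evaluate the inner exponential-kernel integral first (split at the kink $v=w+t$, producing $\Gamma(2H-1)$ and an incomplete-gamma term), integrate out $w$ by Tonelli, and only then raise the exponent from $2H-2$ to $2H-1$ by integration by parts, using $\Gamma(2H)=(2H-1)\Gamma(2H-1)$ and cancelling the $\pm t^{2H-1}/(2H-1)$ boundary terms. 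Both routes land on the same three terms, and yours makes the role of $H>1/2$ visible twice (integrability in (\ref{pipiras}) and finiteness of $\Gamma(2H-1)$). One further point in your favor: your bookkeeping of which superscript attaches to which $\lambda_i$ is more careful than the paper's and exposes a real inconsistency there. For $t\ge 0$ the computation (yours and the paper's penultimate display alike) yields $\frac{\sigma^2H}{\lambda_1+\lambda_2}\bigl(\lambda_2^{1-2H}f_H^{(1)}(\lambda_2 t)+\lambda_1^{1-2H}f_H^{(2)}(\lambda_1 t)\bigr)$ with $f_H^{(1)},f_H^{(2)}$ as literally defined in (\ref{fH1})--(\ref{fH2}); the paper's last line silently uses the two definitions with their labels exchanged (as does its Remark on the limits at $+\infty$), which is how it reaches the pairing displayed in (\ref{cov12}). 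Since, as you observe, the cross-covariance is not an even function of $t$ when $\lambda_1\neq\lambda_2$, a formula in $|t|$ with one fixed pairing cannot hold for both signs: the statement as printed is the $t\le 0$ case under the printed definitions (equivalently, the $t\ge 0$ case if (\ref{fH1}) and (\ref{fH2}) are interchanged). Your closing caveat is exactly the right resolution, and stating the result separately for $t\ge 0$ and $t\le 0$ (or fixing the labels) would make it airtight.
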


In particular when $t=0,$\ we get

\begin{equation*}
\mathbb{E}\left( X_{0}^{(1)}X_{0}^{(2)}\right) =\frac{\sigma ^{2}H\Gamma
\left( 2H\right) }{\lambda _{1}+\lambda _{2}}\left( \lambda
_{2}^{1-2H}+\lambda _{1}^{1-2H}\right) =\frac{\sigma ^{2}\Gamma \left(
2H+1\right) }{2\left( \lambda _{1}+\lambda _{2}\right) }\left( \lambda
_{1}^{1-2H}+\lambda _{2}^{1-2H}\right) .
\end{equation*}

If we put $\lambda _{1}=\lambda _{2}=\lambda $ in (\ref{cov12}) we obtain
the auto-covariance function of any FOU$\left( \lambda ,\sigma ,H\right) :$

\begin{corollary}
\bigskip For any $\left\{ X_{t}\right\} _{t\in \mathbb{R}}\sim $FOU$\left(
\lambda ,\sigma ,H\right) $ where $H>1/2$  we get 
\begin{equation}
\mathbb{E}\left( X_{0}X_{t}\right) =\frac{\sigma ^{2}Hf_{H}\left( \lambda
t\right) }{2\lambda ^{2H}}.  \label{covfou}
\end{equation}
\end{corollary}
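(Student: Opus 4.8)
The plan is to obtain this identity as an immediate specialization of the preceding Proposition, namely by setting $\lambda_1=\lambda_2=\lambda$ in (\ref{cov12}). The first thing I would observe is that the right-hand side of (\ref{cov12}) is perfectly regular at $\lambda_1=\lambda_2$: its only denominator is $\lambda_1+\lambda_2$, which remains strictly positive, so no indeterminacy or singularity arises when the two parameters coincide. Consequently one may evaluate the formula directly at $\lambda_1=\lambda_2=\lambda$ with no limiting argument.

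Next I would identify the two processes. When $\lambda_1=\lambda_2=\lambda$, the processes $\{X_t^{(1)}\}$ and $\{X_t^{(2)}\}$ are both FOU$\left(\lambda,\sigma,H\right)$ and are driven by the \emph{same} fractional Brownian motion $\{\sigma B_H(t)\}$; hence they coincide pathwise, $X_t^{(1)}=X_t^{(2)}=X_t$ for every $t$. Therefore the left-hand side of (\ref{cov12}) becomes exactly the auto-covariance $\gamma(t)=\mathbb{E}\left(X_0 X_t\right)$ of a single FOU$\left(\lambda,\sigma,H\right)$ process.

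It then remains to simplify the right-hand side. Substituting $\lambda_1=\lambda_2=\lambda$ gives
\begin{equation*}
\mathbb{E}\left(X_0 X_t\right)=\frac{\sigma^2 H}{2\lambda}\,\lambda^{1-2H}\left(f_H^{(1)}(\lambda|t|)+f_H^{(2)}(\lambda|t|)\right).
\end{equation*}
Using the definition $f_H=f_H^{(1)}+f_H^{(2)}$ from (\ref{fH}) to combine the two terms, and collecting powers of $\lambda$ via $\lambda^{1-2H}/(2\lambda)=1/(2\lambda^{2H})$, I obtain
\begin{equation*}
\mathbb{E}\left(X_0 X_t\right)=\frac{\sigma^2 H\, f_H(\lambda|t|)}{2\lambda^{2H}},
\end{equation*}
which is (\ref{covfou}); the absolute value is harmless, since stationarity forces $\gamma$ to be even while $f_H$ is evaluated at $\lambda t$ for $t\ge 0$.

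Because the substantive analytic work — the evaluation of the double integral in (\ref{gamma}) and its identification with $f_H^{(1)}$ and $f_H^{(2)}$ — is already performed in the Proposition, there is no genuine obstacle at this step. The only point requiring care is confirming that the passage to equal parameters is legitimate: that (\ref{cov12}) was established for arbitrary positive $\lambda_1,\lambda_2$ rather than only for distinct values, and that its sole denominator $\lambda_1+\lambda_2$ does not vanish. Both are immediate, so the corollary follows at once.
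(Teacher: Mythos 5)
Your proposal is correct and follows essentially the same route as the paper, which obtains (\ref{covfou}) precisely by setting $\lambda_{1}=\lambda_{2}=\lambda$ in (\ref{cov12}); as you note, this is legitimate because the proposition and its proof involve only the denominator $\lambda_{1}+\lambda_{2}$ and never require the two parameters to be distinct. The simplification via $f_{H}=f_{H}^{(1)}+f_{H}^{(2)}$ and $\lambda^{1-2H}/(2\lambda)=1/(2\lambda^{2H})$ is exactly the intended calculation.
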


Observe that property 3 of propositon 8 and (\ref{covfou}), show that any FOU$\left(
\lambda ,\sigma ,H\right) $ is a long memory process.
\begin{remark}
Observe that $f_{H}(0)=2\Gamma \left( 2H\right) ,$ and put $t=0$ in (\ref%
{covfou}), to obtain the known formula for the variance of any FOU$\left(
\lambda ,\sigma ,H\right) :$ 
\begin{equation*}
\mathbb{V}\left( X_{t}\right) =\frac{\sigma ^{2}\Gamma \left( 2H+1\right) }{%
2\lambda ^{2H}}.
\end{equation*}
\end{remark}

In section 4  we prove the following proposition that shows us that the auto-covariance function of any $%
FOU(p)$ where $\lambda _{1},\lambda _{2},...,\lambda _{p}$ are pairwise
different, is a linear combination of the functions $f_{H}\left( \lambda
_{i}t\right) .$

\begin{proposition}
If $\left\{ X_{t}\right\} \sim $FOU$\left( \lambda _{1},\lambda
_{2},...,\lambda _{p},\sigma ,H\right) $ and $p \geq 2$, then 
\begin{equation}
\mathbb{E}\left( X_{0}X_{t}\right) =\frac{\sigma ^{2}H}{2}\sum_{i=1}^{p}%
\frac{\lambda _{i}^{2p-2H-2}}{\prod\limits_{j\neq i}\left( \lambda
_{i}^{2}-\lambda _{j}^{2}\right) }f_{H}(\lambda _{i}t).  \label{covfoup}
\end{equation}
\end{proposition}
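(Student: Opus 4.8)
The plan is to reduce everything to the already-established two-parameter cross-covariance formula (\ref{cov12}) and then to carry out a single partial-fraction identity. Since here $p_1=\cdots=p_q=1$ (so $q=p$), the process reduces, as noted earlier, to
\begin{equation*}
X_t=\sum_{i=1}^p K_i(\lambda)\,X_t^{(i)},\qquad \{X_t^{(i)}\}\sim\mathrm{FOU}(\lambda_i,\sigma,H),
\end{equation*}
all generated by the same $\sigma B_H$. First I would expand
\begin{equation*}
\gamma(t)=\mathbb{E}(X_0X_t)=\sum_{i,i'=1}^p K_i(\lambda)K_{i'}(\lambda)\,\mathbb{E}\bigl(X_0^{(i)}X_t^{(i')}\bigr)
\end{equation*}
and substitute (\ref{cov12}) into each term. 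The formula is valid on the diagonal $i=i'$ as well, since its derivation from (\ref{pipiras}) never used $\lambda_1\neq\lambda_2$; this is consistent with (\ref{covfou}). Using $\gamma(t)=\gamma(-t)$ I will write everything in terms of $|t|$.

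After substitution, $\gamma(t)$ splits into an $f_H^{(1)}$-part and an $f_H^{(2)}$-part. Collecting the $f_H^{(1)}$-part by its free index $i$ and the $f_H^{(2)}$-part by its free index $i'$, both are governed by the same scalar
\begin{equation*}
S(\lambda_i):=\sum_{j=1}^p\frac{K_j(\lambda)}{\lambda_i+\lambda_j}.
\end{equation*}
Concretely, the coefficient of $f_H^{(1)}(\lambda_i|t|)$ and that of $f_H^{(2)}(\lambda_i|t|)$ both equal $\sigma^2 H\,K_i(\lambda)\lambda_i^{1-2H}S(\lambda_i)$ (the match follows from relabelling $i\leftrightarrow i'$ in the double sum). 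Since these agree, the two parts recombine through $f_H=f_H^{(1)}+f_H^{(2)}$ into
\begin{equation*}
\gamma(t)=\sigma^2 H\sum_{i=1}^p K_i(\lambda)\,\lambda_i^{1-2H}\,S(\lambda_i)\,f_H(\lambda_i|t|).
\end{equation*}
This symmetry, which turns the off-diagonal cross terms into a clean diagonal sum, is the structural observation on which the whole computation hinges.

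The main obstacle is evaluating $S(\lambda_i)$ in closed form. For this I would use the partial-fraction identity
\begin{equation*}
\frac{x^{p-1}}{\prod_{j=1}^p(x-\lambda_j)}=\sum_{j=1}^p\frac{K_j(\lambda)}{x-\lambda_j},
\end{equation*}
whose residues are exactly the $K_j(\lambda)$ of (\ref{k_i}), rewritten as $K_j(\lambda)=\lambda_j^{p-1}/\prod_{k\neq j}(\lambda_j-\lambda_k)$. Evaluating this identity at $x=-\lambda_i$ and simplifying signs yields
\begin{equation*}
S(\lambda_i)=\frac{\lambda_i^{p-1}}{\prod_{j=1}^p(\lambda_i+\lambda_j)}=\frac{\lambda_i^{p-2}}{2\prod_{j\neq i}(\lambda_i+\lambda_j)},
\end{equation*}
the last step isolating the $j=i$ factor $2\lambda_i$.

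Finally I would substitute $K_i(\lambda)=\lambda_i^{p-1}/\prod_{j\neq i}(\lambda_i-\lambda_j)$ together with the value of $S(\lambda_i)$ into the displayed expression for $\gamma(t)$:
\begin{equation*}
K_i(\lambda)\,\lambda_i^{1-2H}\,S(\lambda_i)=\frac{\lambda_i^{p-2H}}{\prod_{j\neq i}(\lambda_i-\lambda_j)}\cdot\frac{\lambda_i^{p-2}}{2\prod_{j\neq i}(\lambda_i+\lambda_j)}=\frac12\cdot\frac{\lambda_i^{2p-2H-2}}{\prod_{j\neq i}(\lambda_i^2-\lambda_j^2)},
\end{equation*}
which gives (\ref{covfoup}) at once. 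The only genuinely nontrivial point is the partial-fraction evaluation of $S(\lambda_i)$; the remainder is bookkeeping of indices and exponents, together with the observation that the $f_H^{(1)}$ and $f_H^{(2)}$ coefficients coincide so that $f_H$ reassembles.
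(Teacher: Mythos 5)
Your proof is correct, and its skeleton coincides with the paper's: expand $\mathbb{E}(X_0X_t)$ as a double sum of cross-covariances via (\ref{cov12}), observe that the $f_H^{(1)}$ and $f_H^{(2)}$ coefficients agree after relabelling so that $f_H=f_H^{(1)}+f_H^{(2)}$ reassembles, and then evaluate the resulting coefficient in closed form by partial fractions. Where you genuinely depart from the paper is in how that closed form is obtained. The paper first splits the diagonal terms $i=i'$ from the off-diagonal ones and then invokes its Lemma 1, namely $K_i+2\lambda_i\sum_{j\neq i}K_j/(\lambda_i+\lambda_j)=\lambda_i^{p-1}/\prod_{j\neq i}(\lambda_i+\lambda_j)$, which it proves by decomposing a difference of two rational functions into simple fractions and recombining them term by term — a comparatively laborious computation. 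You instead keep the diagonal term inside $S(\lambda_i)=\sum_{j=1}^{p}K_j/(\lambda_i+\lambda_j)$ and obtain its value in one stroke by evaluating the standard expansion $x^{p-1}/\prod_{j=1}^{p}(x-\lambda_j)=\sum_{j}K_j/(x-\lambda_j)$ (whose residues are precisely the $K_j$ of (\ref{k_i})) at the point $x=-\lambda_i$. Your identity $S(\lambda_i)=\lambda_i^{p-2}/\bigl(2\prod_{j\neq i}(\lambda_i+\lambda_j)\bigr)$ is exactly Lemma 1 divided by $2\lambda_i$, so the two statements are equivalent, but your derivation is shorter and makes the mechanism transparent; it also spares the separate diagonal bookkeeping. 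You were likewise right to note explicitly — as the paper itself does when deducing (\ref{covfou}) — that (\ref{cov12}) remains valid when the two parameters coincide, which is what licenses applying it to the $i=i'$ terms.
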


\begin{remark}
If $p=1,$ we can consider that  (\ref{covfoup})\bigskip\ it is equal to (\ref{covfou}).
\end{remark}

Using (\ref{covfoup}), and property 5 of the function $f_{H}$ (in Proposition 1) and a little
more work, we obtain Theorem 2, which gives a formula for the spectral density of the
process, that shows that if $p\geq 2,$ then any FOU$\left( p\right) $ is a
short memory \ process.

Observe that when $p=2$, then (\ref{covfoup}) says that \begin{equation}
\mathbb{E}\left( X_{0}X_{t}\right) =\frac{\sigma ^{2}H}{2\left (\lambda_{1}^{2}-\lambda_{2}^{2} \right )}\left ( 
\lambda_{1}^{2-2H}f_{H}(\lambda_{1}t)-\lambda_{2}^{2-2H}f_{H}(\lambda_{2}t)\right ).  \label{covfoup2}
\end{equation}
Now, if we put $\lambda_{1}=\lambda$ and $\lambda_{2} \rightarrow 0$ in (\ref{covfoup2}) we obtain 
(\ref{covfou}). This is FOU$(\lambda_{1},\lambda_{2},
\sigma, H) \rightarrow $ FOU$(\lambda, \sigma, H)$. Therefore, 
for small values of $\lambda_{2}$, the FOU$(\lambda_{1},\lambda_{2},
\sigma, H) $ process can be used to model both short and long memory processes.
\newpage
\begin{theorem}
If $X=\left\{ X_{t}\right\} _{t\in \mathbb{R}}\sim $FOU$\left( \lambda
_{1}^{\left( p_{1}\right) },\lambda _{2}^{\left( p_{2}\right) },...,\lambda
_{q}^{\left( p_{q}\right) },\sigma ,H\right) $ where \\ $%
p_{1}+p_{2}+...+p_{q}=p,$ then the spectral density of the process is
\begin{equation}
f^{\left( X\right) }(x)=\frac{\sigma ^{2}H\Gamma \left( 2H+1\right) \sin
\left( H\pi \right) \left\vert x\right\vert ^{2p-1-2H}}{2\pi
\prod\limits_{i=1}^{q}\left( \lambda _{i}^{2}+x^{2}\right) ^{p_{i}}}.
\label{espectral}
\end{equation}%
In particular, if $\left\{ X_{t}\right\} _{t\in \mathbb{R}}\sim $FOU$\left(
\lambda _{1},\lambda _{2},...,\lambda _{p},\sigma ,H\right) $ then 
\begin{equation}
f^{\left( X\right) }(x)=\frac{\sigma ^{2}H\Gamma \left( 2H+1\right) \sin
\left( H\pi \right) \left\vert x\right\vert ^{2p-1-2H}}{2\pi
\prod\limits_{i=1}^{p}\left( \lambda _{i}^{2}+x^{2}\right) }. \label{espectraldistintos}
\end{equation}
\end{theorem}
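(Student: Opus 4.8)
The plan is to read off $f^{(X)}$ as the Fourier transform of the autocovariance, treating first the generic case in which the $\lambda_i$ are pairwise different (so $q=p$ and every $p_i=1$), where $\gamma$ is available in closed form through (\ref{covfoup}). With the convention $\gamma(t)=\int_{-\infty}^{+\infty}e^{i\xi t}f^{(X)}(\xi)\,d\xi$, it suffices to compute the Fourier transform of a single kernel $f_H(\lambda t)$ and then superpose the $p$ terms of (\ref{covfoup}). The whole argument is then powered by property 5 of the list of properties of $f_H$, which represents $f_H$ as a Cauchy-type integral of $e^{iv}|v|^{1-2H}$.

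The first step I would isolate as a lemma is the spectral density of a single kernel:
\[
\frac{1}{2\pi}\int_{-\infty}^{+\infty}f_H(\lambda t)e^{-i\xi t}\,dt=\frac{\Gamma(2H+1)\sin(H\pi)}{\pi}\,\frac{\lambda^{2H}|\xi|^{1-2H}}{\lambda^2+\xi^2}.
\]
I would obtain this by computing the Fourier transform of $f_H(\lambda t)$ directly from the Cauchy-kernel representation in property 5 — writing the factor $1/(w^2+t^2)$ arising there as a Fourier integral and exchanging the order of integration — and then fixing the overall constant by the requirement that the instance $p=1$ reproduce both (\ref{covfou}) and the variance $\sigma^{2}\Gamma(2H+1)/(2\lambda^{2H})$. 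The output is a multiple of $|\xi|^{1-2H}/(\lambda^2+\xi^2)$, which is the whole point: the factor $|\xi|^{1-2H}$ is the singular (long-memory) part and $1/(\lambda^2+\xi^2)$ is the rational Ornstein--Uhlenbeck part.

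Substituting this lemma into (\ref{covfoup}) and combining exponents via $\lambda_i^{2p-2H-2}\cdot\lambda_i^{2H}=\lambda_i^{2p-2}$ gives
\[
f^{(X)}(\xi)=\frac{\sigma^2 H\Gamma(2H+1)\sin(H\pi)}{2\pi}\,|\xi|^{1-2H}\sum_{i=1}^{p}\frac{\lambda_i^{2p-2}}{\prod_{j\neq i}(\lambda_i^2-\lambda_j^2)}\,\frac{1}{\lambda_i^2+\xi^2}.
\]
The sum is exactly the partial-fraction expansion, read backwards, of one rational function: with $y=\xi^2$, the residue of $y^{p-1}/\prod_i(y+\lambda_i^2)$ at $y=-\lambda_i^2$ is $\lambda_i^{2p-2}/\prod_{j\neq i}(\lambda_i^2-\lambda_j^2)$, so
\[
\sum_{i=1}^{p}\frac{\lambda_i^{2p-2}}{\prod_{j\neq i}(\lambda_i^2-\lambda_j^2)}\,\frac{1}{\lambda_i^2+\xi^2}=\frac{|\xi|^{2p-2}}{\prod_{i=1}^{p}(\lambda_i^2+\xi^2)},
\]
and multiplying by $|\xi|^{1-2H}$ produces the numerator $|\xi|^{2p-1-2H}$, i.e. (\ref{espectraldistintos}). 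This identity does double duty: the product form shows $f^{(X)}(\xi)\propto|\xi|^{2p-1-2H}$ as $\xi\to0$, so the singular factor $|\xi|^{1-2H}$ is absorbed and $f^{(X)}$ is bounded (indeed vanishing at the origin) precisely when $2p-1-2H>0$, that is $p\ge2$; this is the short-memory regime, giving summable autocovariances, whereas for $p=1$ the singularity $|\xi|^{1-2H}$ survives and the process has long memory.

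For the grouped process, where $T_{\lambda_i}$ is iterated $p_i\ge1$ times, I would pass to the confluent limit: both sides of (\ref{espectral}) are rational in $(\lambda_1,\dots,\lambda_p)$, (\ref{espectraldistintos}) holds on the dense open set of pairwise different parameters, and letting groups of $\lambda$'s coalesce turns $\prod_i(\lambda_i^2+\xi^2)$ into $\prod_i(\lambda_i^2+\xi^2)^{p_i}$, yielding (\ref{espectral}); alternatively one recomputes directly from (\ref{covgeneral})--(\ref{gamma}) using the degree-$j$ kernels and the same Fourier identity. I expect the two genuine difficulties to be these. First, justifying the key lemma rigorously: for $p=1$ the kernel $f_H(\lambda t)$ decays only like $t^{2H-2}$ and is not integrable, so its Fourier transform exists only in a regularized (tempered-distribution or Abel-summed) sense, and the interchange of summation and integration must be controlled so that the integrable singularity $|\xi|^{1-2H}$ at $\xi=0$ is handled correctly. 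Second, the confluent limit (or the direct repeated-root computation) requires verifying that the limit commutes with the Fourier transform, and this bookkeeping is where most of the work sits. As a cross-check I would keep in mind the harmonizable representation of $B_H$, under which applying $T_{\lambda_i}$ multiplies the spectral transfer function by a factor of modulus-square $\propto 1/(\lambda_i^2+\xi^2)$, the innermost operator contributing the $|\xi|^{1-2H}$ coming from $dB_H$ and each further operator an extra $|\xi|^2$; this produces $|\xi|^{2p-1-2H}/\prod_i(\lambda_i^2+\xi^2)^{p_i}$ immediately and is a cleaner, if less self-contained, route to the same formula.
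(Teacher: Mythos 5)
Your proposal is correct and follows essentially the same route as the paper: your key lemma (the Fourier transform of $f_H(\lambda t)$ as a multiple of $\lambda^{2H}|\xi|^{1-2H}/(\lambda^2+\xi^2)$) is exactly the paper's property 5 of $f_H$, your partial-fraction/residue identity is the paper's Lemma 3, and your confluent-limit treatment of repeated $\lambda$'s matches the paper's argument, which realizes the FOU with repeated parameters as a pointwise limit of FOU$(p)$ processes with distinct parameters $\lambda_i+k/n$ and uses Gaussianity to pass the spectral densities to the limit. The only differences are cosmetic (you phrase the collapse of the sum as a residue computation rather than an induction, and you justify the coalescence by rationality plus the commutation of limits that you correctly flag as the main bookkeeping), so this is the paper's proof in all essentials.
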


\begin{remark}
For $H\in \left( 1/2,1\right) ,$ if $p=1$, then $0$ is a singularity of the
spectral density of the process, then we have a long memory process,
and if $p\geq 2,$ then $0$ is not a singularity of the spectral density of
the process, then we are under a short memory process.
\end{remark}
\section{Applications to real data}

In this section we analize three real data set. In each one of them, we
adjusted different FOU$\left( p\right) $ models \ for $p=2,3,4$, and ARMA
models. To fit the FOU$\left( p\right) $ model, we supose that the real
data set, are indexed in the interval $\left[ 0,T\right] $ for $T=20.$ We
also asume in all of cases that the observations are
equally spaced in time, this is:\ $X_{T/n},X_{2T/n},...,X_{T}.$ A change in the value of $%
T$ results in a change of estimated values of the parameters $\lambda_{i}$ and $\sigma$  but it does not
change the substantial conclusions. That is why we choose arbitrarily $T=20.$

To estimate the parameters of each FOU$\left( p\right) $, we will apply a naive method. We call $\lambda
=\left( \lambda _{1},\lambda _{2},...,\lambda _{q}\right) $  and $\widehat{\gamma }\text{ for the
empirical auto-covariance function,} $ and use 
\begin{equation*}
\left( \widehat{\lambda },\widehat{\sigma },\widehat{H}\right) =\arg \min 
\frac{1}{h}\sum_{i=1}^{h}\left( \gamma \left( iT/n\right) -\widehat{\gamma }%
\left( iT/n\right) \right) ^{2}.
\end{equation*}%
This is, we choose the values of $\left( \lambda ,\sigma ,H\right) $ that
minimize the difference in quadratic mean between the empirical
and theoric auto-covariances in the first $h$ points. The value \ of $h$ was
choosen arbitrarily. In this section, we show the results for $h=10$ terms.
Similar results were obtained for other values of $h.$ The $\arg \min$ 
was taken over the set $\{ (\lambda, \sigma,H) :\ \ \lambda >0, \ \  \sigma >0, \ \  H\geq 1/2 \}.$

In each case, we also fit different ARMA models, and we compare the
performance between these ARMA models and FOU models, through four measures
on the quality of predictions: the root mean square error of prediction for the last $m$ observations,
this is \[RMSE=\sqrt{\frac{1}{m}\sum_{i=1}^{m}\left( X_{n-m+i}-\widehat{X}%
_{n-m+i}\right) ^{2}};\] the mean absolute error of prediction for last $m$
observations and their respectives predictions, this is \[MAE=\frac{1}{m}%
\sum_{i=1}^{m}\left\vert X_{n-m+i}-\widehat{X}_{n-m+i}\right\vert ; \] the
Willmott index (Willmott, 1982) defined by 
\[d=1-\frac{\sum_{i=1}^{m}%
\left( X_{n-m+i}-\widehat{X}_{n-m+i}\right) ^{2}}{\sum_{i=1}^{m}\left(
\left\vert \widehat{X}_{n-m+i}-\overline{X}(m)\right\vert +\left\vert 
\widehat{X}_{n-m+i}-\overline{X}(m)\right\vert \right) ^{2}}\] and Wilmott $L^{1}$ index defined by
\[ d_{1}=1-\frac{\sum_{i=1}^{m}\left\vert
X_{n-m+i}-\widehat{X}_{n-m+i}\right\vert }{\sum_{i=1}^{m}\left( \left\vert 
\widehat{X}_{n-m+i}-\overline{X}(m)\right\vert +\left\vert \widehat{X}%
_{n-m+i}-\overline{X}(m)\right\vert \right) };\]
 where $\overline{X}(m):=%
\frac{1}{m}\sum_{i=1}^{m}X_{n-m+i},$ and $X_{1},X_{2},...,X_{n}$ $\left( 
\text{or }X_{T/n},X_{2T/n},...,X_{T}\right) $ are the real observations, being $%
\widehat{X}_{i}$ are the predictions given by the model for the value $%
X_{i}. $
All the predictions considered are one step.
We will also compare in the three cases the graphs  of empirical auto-covariance function with
those of some fitted models.

\subsection{Oxygen saturation in blood}

The oxygen saturation in blood of a newborn child has been monitored during
seventeen hours, and measures taking every two seconds. We asume that a
series $X_{1},X_{2},...,X_{304}$ of measures taken at intervals of 200
seconds. We adjusted an AR$\left( 1\right) $ and ARMA$\left( 3,3\right) $ to
compare with some FOU models. The AR$\left( 1\right) $ model was choosen
because it maximize Willmott index among all the
ARMA$\left( p,q\right) $ models with $p,q\leq 4.$ The ARMA$\left( 3,3\right) 
$ model was choosen because it was where the maximum AIC value was obtained
between all the ARMA$\left( p,q\right) $ models with $p,q\leq 4.$ 
Figure \ref{acf_oxy} shows \ the empirical auto-covariances  of the series
and the auto-covariances of the adjusted ARMA and FOU models. 
In Figure \ref{oxy2a2} we show the last 
20 observations and their corresponding predictions according to each model. 
We observe that the shape of the predictions for the
FOU$(2)$ model with $\lambda_{1}\neq \lambda_{2}$ is more similar to the observed values than the other models considered.
 In Figure ~\ref{boxplot_oxy} we show the boxplots of  $MAE$ for $%
m=1,2,3,...,20$ predictions for four models adjusted.

In Table 1, we show the values of $d$, $RMSE$, $d_{1}$  and $MAE$, for the last $20$ predictions, 
for different FOU$(p)$ for $p=2,3,4$.
We see that
$\text{FOU}\left( \lambda _{1},\lambda _{2},\sigma ,H\right)$ model achieves a 14\% improvement
over the AR$(1)$ model in Willmott $d$ index, 17 \% in Willmott $d_{1}$ index, 6,5 \% in $MAE$, and 0.6 \% in $RMSE$.
We also observe the good behavior of  $\text{FOU}\left( \lambda ^{\left( 3\right) },\sigma ,H\right)$. Is the best in 
$RMSE$ and $MAE$ and it is very close to the best in $d$ and $d_{1}$. We also see a similar
performance of FOU$(2)$, FOU$(3)$ and FOU$(4)$ for pairwise different  values of $\lambda_{i}$.

\begin{figure}[H]
 \centering
   \includegraphics[scale=0.49]{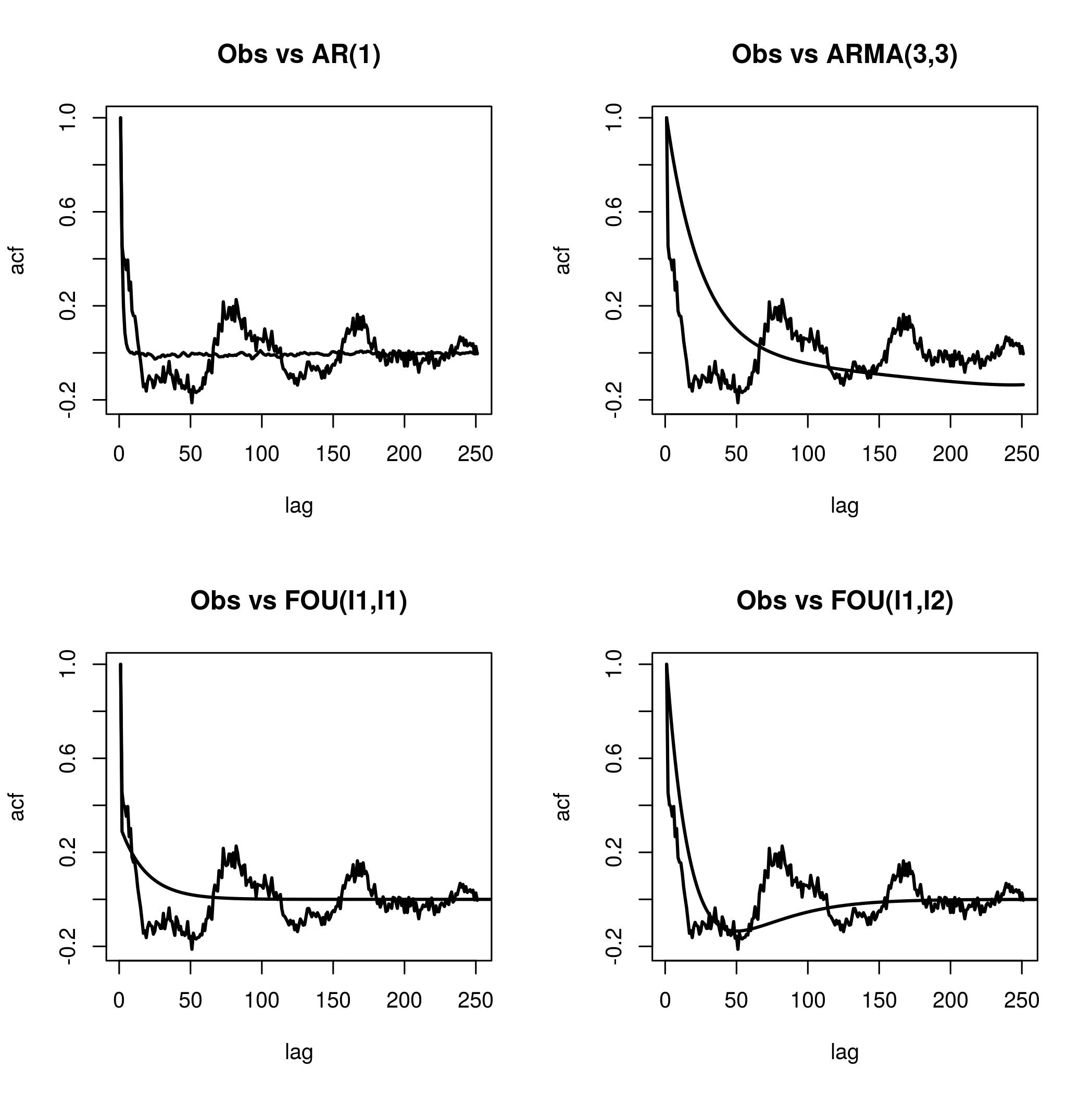} 

   \caption{Empirical auto-covariances vs fitted auto-covariances according to the adjusted model.} 
  \label{acf_oxy}
\end{figure}

\begin{figure}[H]
 \centering
   \includegraphics[scale=0.49]{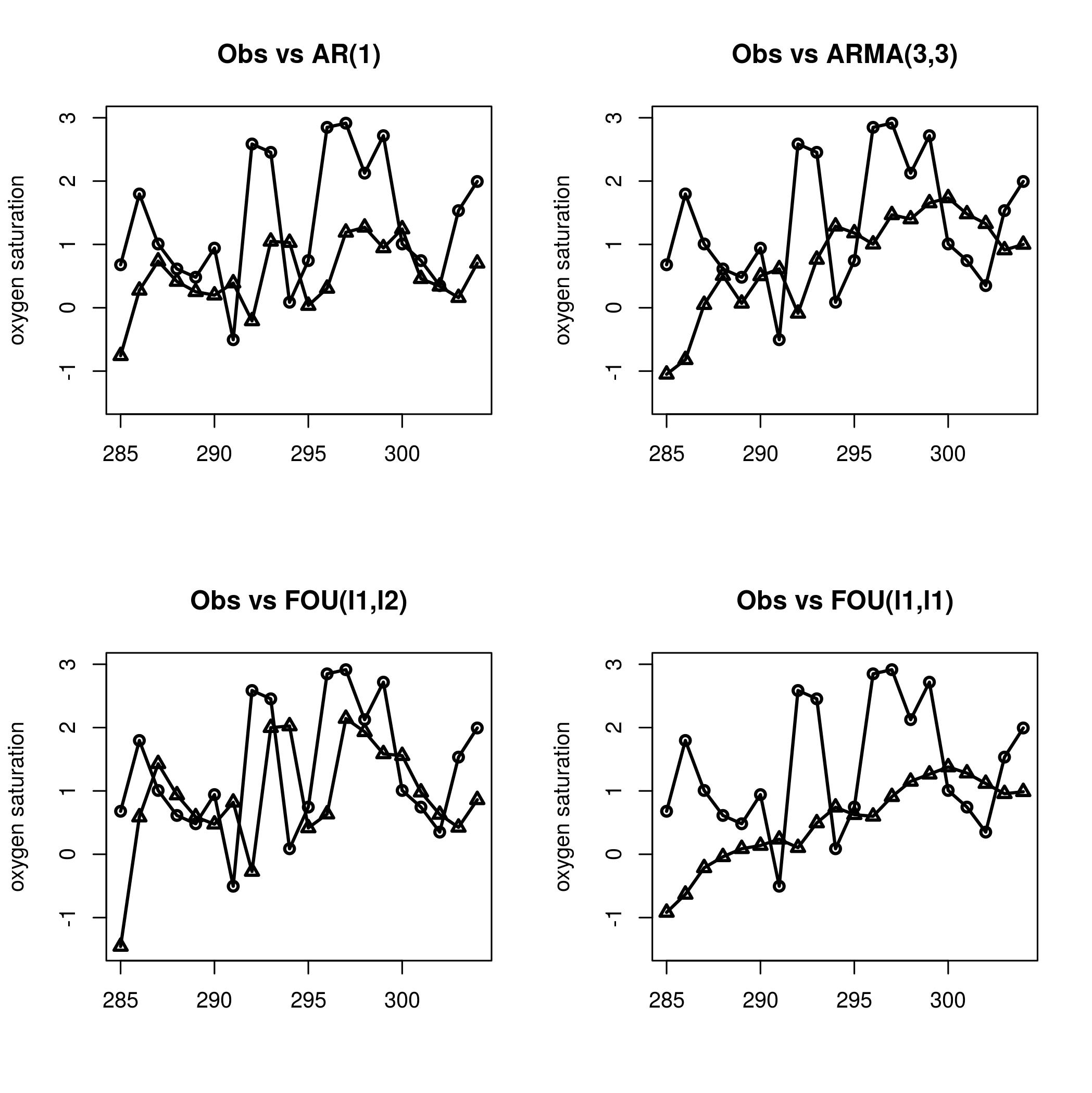} 

   \caption{Last 20 observed values (o) and your corresponding predictions ($ \triangle $) according to te model.} 
  \label{oxy2a2}
\end{figure}

In Figure ~\ref{Willmott_MAE}
we show the \ graph of Willmott index for $m=1,2,3,...,20$ predictions and the graph of the $MAE$. We observe
that for values of $m$ between $6$ to $20$, the  mean absolute predictions error of FOU$(2)$ with $\lambda_{1}\neq\lambda_{2}$ 
values are less than the 
AR$(1)$ and ARMA$(3,3)$ models.
\begin{figure}[H]
 \centering
   \includegraphics[scale=0.382]{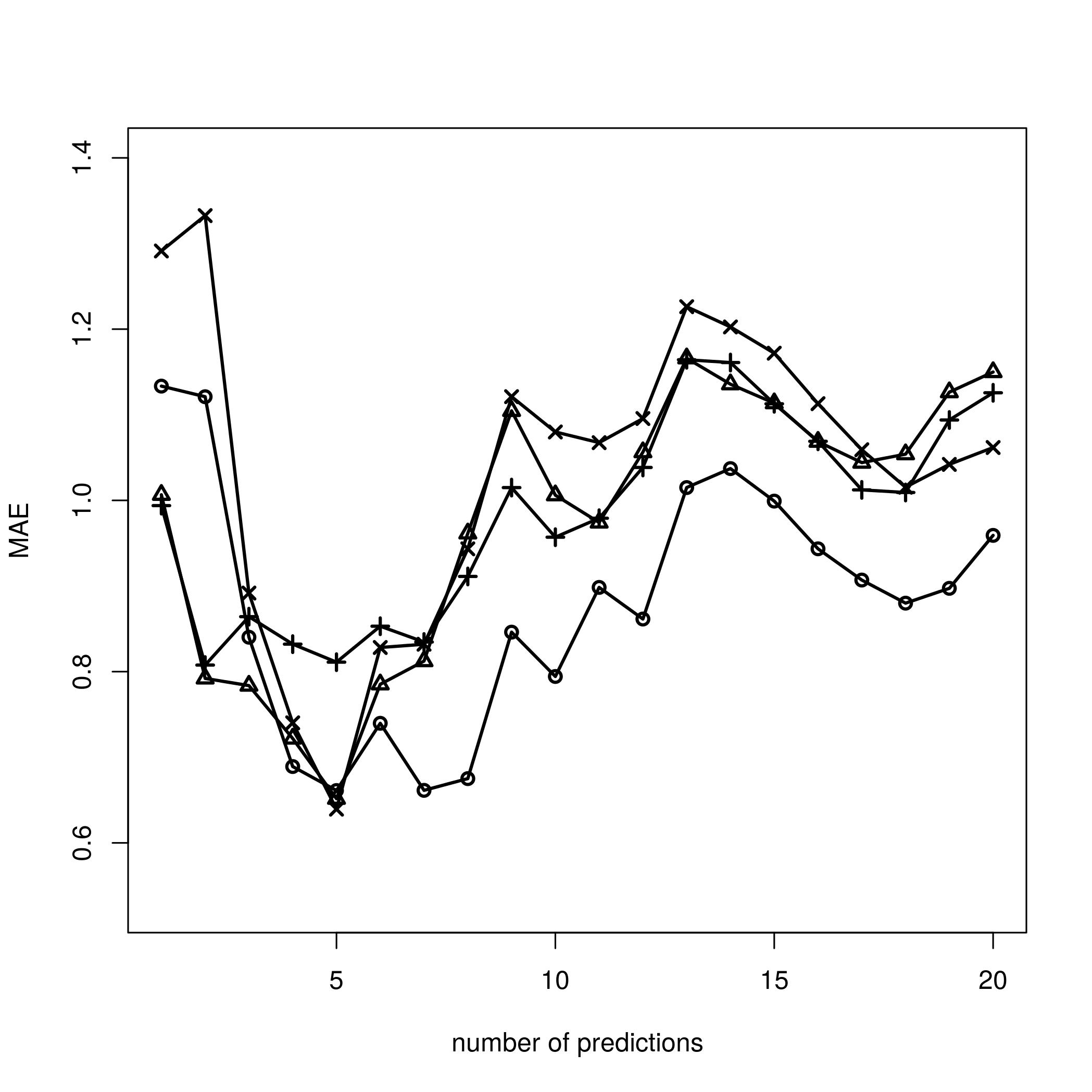} 
   \includegraphics[scale=0.382]{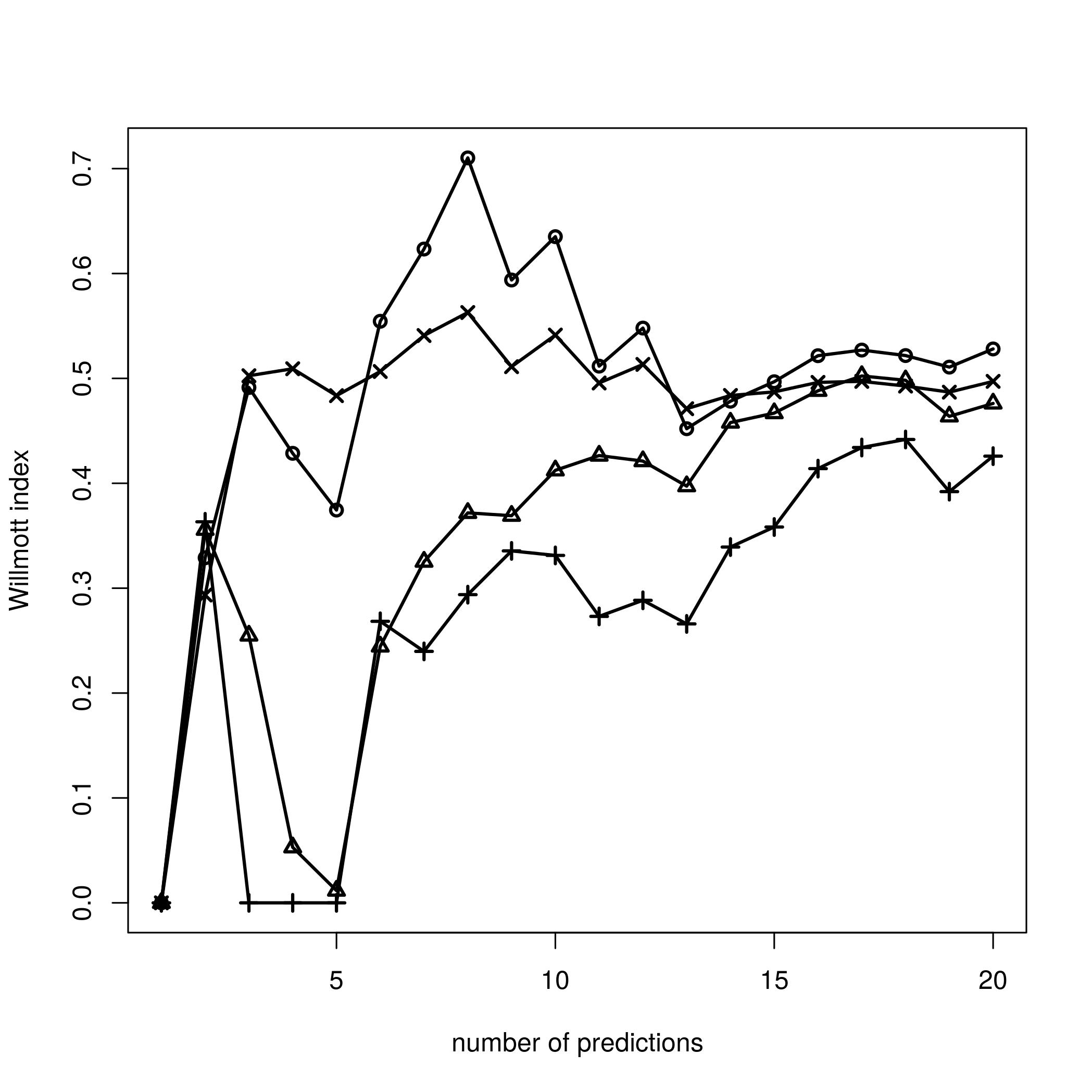} 
   \caption{FOU$(\lambda_{1},\lambda_{2})$ ($\circ$),  FOU$(\lambda^{(2)})$ ($\triangle$),  ARMA$(3,3)$ ($ +$), 
    AR$(1)$ ($\times$).  MAE for the last $m=1,2,3,...,20$ observations for the four models, and Willmott values. 
    } 
  \label{Willmott_MAE}
\end{figure}

\begin{figure}[H]
 \centering
   \includegraphics[scale=0.44]{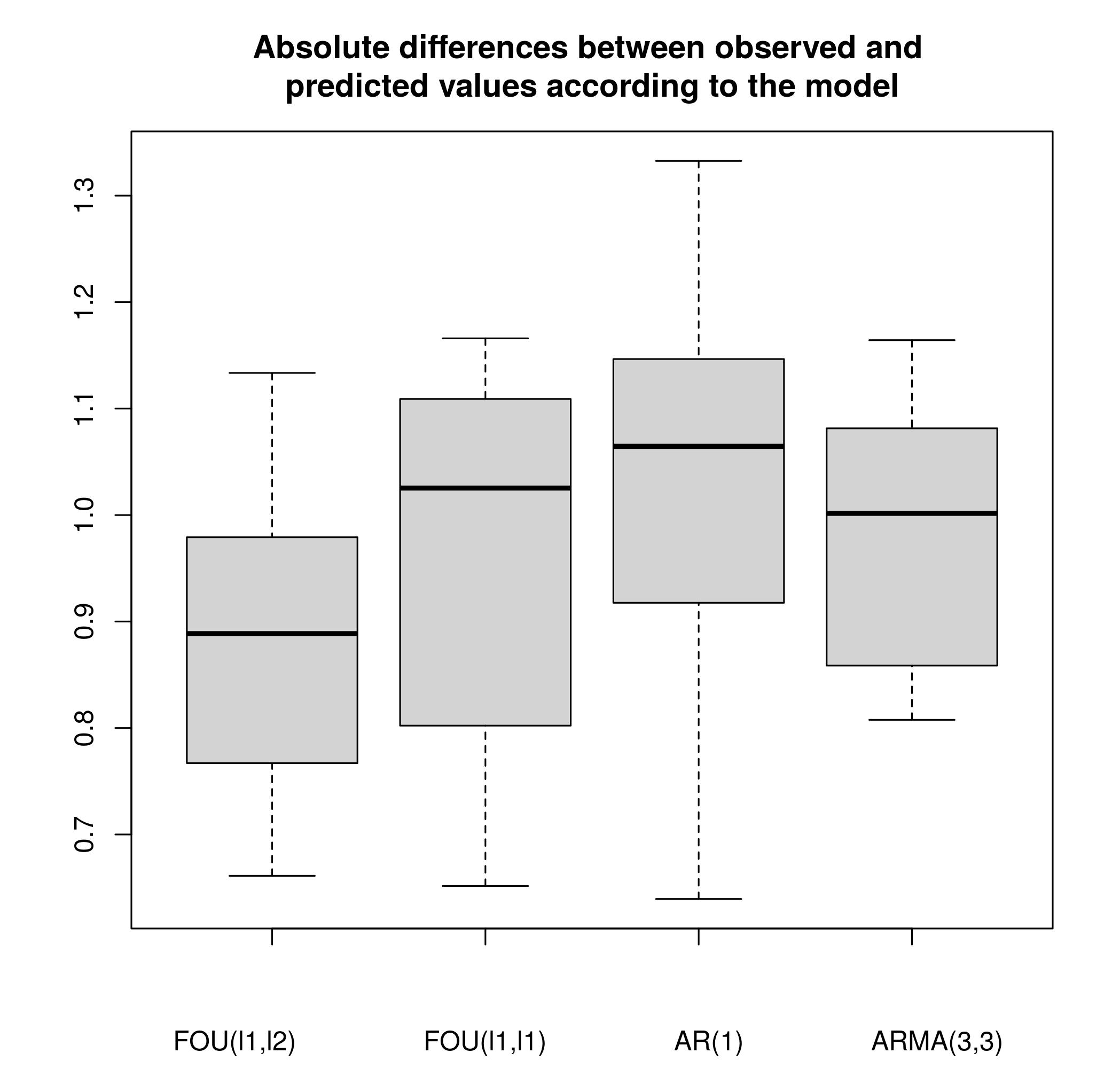} 

   \caption{Absolute differences between observed and predicted values according to the model.} 
  \label{boxplot_oxy}
\end{figure}

\begin{center}

Table 1. Values of $d$, $RMSE$, $d_{1}$  and $MAE$ for different models. 

\fbox{$%
\begin{array}{c|cccc}
\text{Model} & d & RMSE & d_{1} & MAE \\ 
\hline
\text{AR}\left( 1\right) & 0.4972 & 1.3051 & 0.3866 & 1.0617 \\ 
\text{ARMA}\left( 3,3\right) & 0.4259 & 1.3148 & 0.2868 & 1.1257 \\ 
\text{FOU}\left( \lambda _{1},\lambda _{2},\sigma ,H\right) & 0.5663 & 1.2967
& 0.4513 & 0.9967 \\ 
\text{FOU}\left( \lambda _{1},\lambda _{2},\lambda _{3},\sigma ,H\right) & 
0.5653 & 1.2803 & 0.4482 & 0.9894 \\ 
\text{FOU}\left( \lambda _{1},\lambda _{2},\lambda _{3},\lambda _{4},\sigma
,H\right) & 0.5643 & 1.2783 & 0.4476 & 0.9882 \\ 
\text{FOU}\left( \lambda ^{\left( 2\right) },\sigma ,H\right) & 0.4754 & 
1.3565 & 0.3303 & 1.1554 \\ 
\text{FOU}\left( \lambda ^{\left( 3\right) },\sigma ,H\right) & 0.5652 & 
1.2750 & 0.4479 & 0.9858 \\ 
\text{FOU}\left( \lambda ^{\left( 4\right) },\sigma ,H\right) & 0.5607 & 
1.2980 & 0.4474 & 1.0015%
\end{array}%
$}
\end{center}

\subsection{Box, Jenkins and Reinsel ``series A"}

The Series A is a record of $197$ chemical process concentration readings,
taken every two hours. This series was introduced by Box et al (Box et al,
1994, Ch. 4), also suggest an ARMA$\left( 1,1\right) $ to this data set. An
AR$\left( 7\right) $ are proposed in (Cleveland, 1971) and (McLeod \& Zang,
2006). In Figure \ref{acfseriesA} we observe that auto-covariances of AR$(7)$ and ARMA$(1,1)$ adjusted models, goes to zero very quickly
and their auto-covariance structure does not resemble that observed.

\begin{figure}[H]
 \centering
   \includegraphics[scale=0.49]{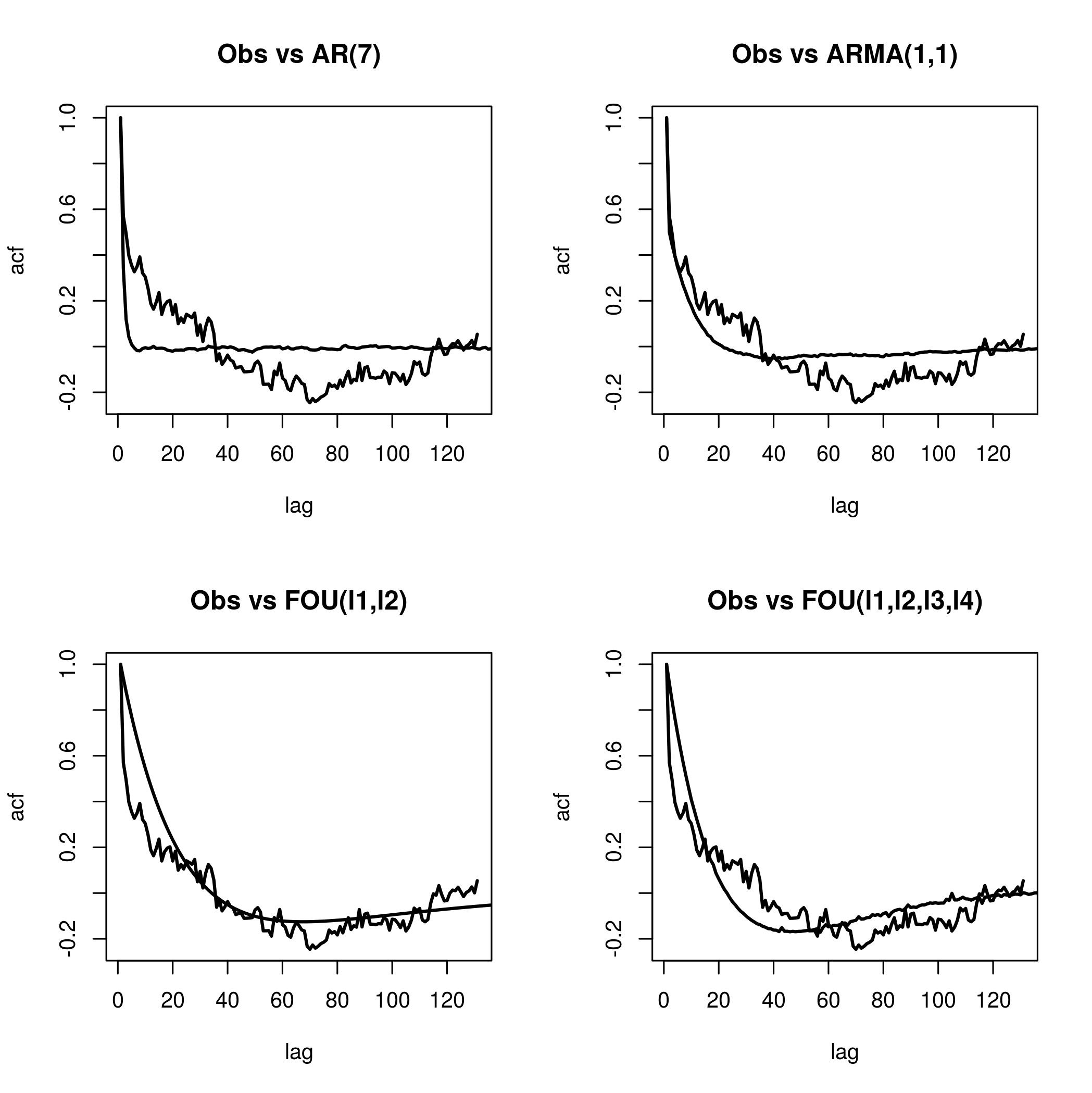} 
   \caption{Empirical auto-covariances vs fitted auto-covariances according to the adjusted model for 
   series A data set.} 
  \label{acfseriesA}
\end{figure}
In Figure \ref{seriesA_obs_2a2} we show the last 
20 observations and their corresponding predictions according to each model. 
\begin{figure}[H]
 \centering
   \includegraphics[scale=0.49]{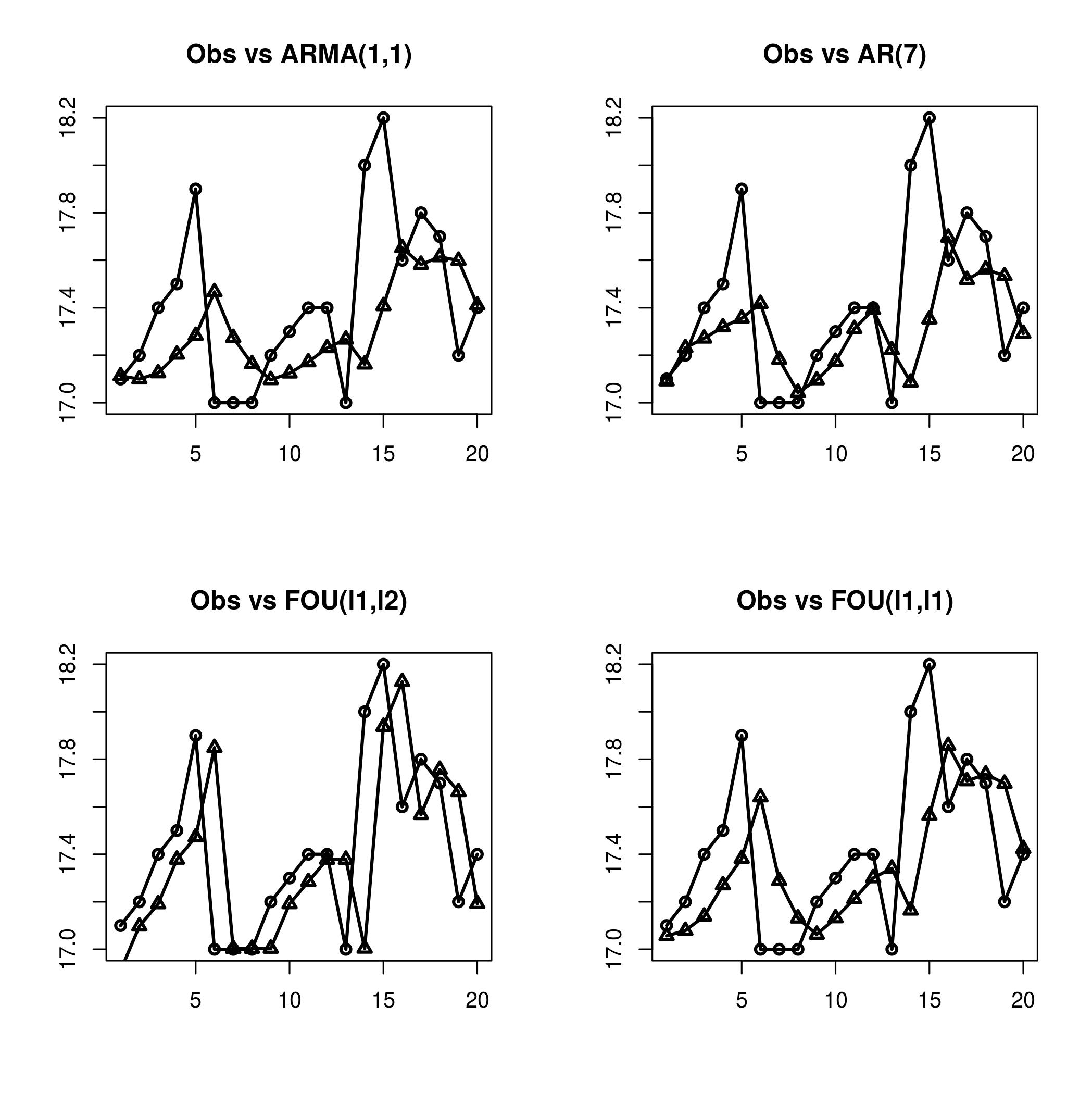} 
   \caption{Last 20 observed values (o) and your corresponding predictions ($ \triangle $) according to te model for 
   series A data set.} 
  \label{seriesA_obs_2a2}
\end{figure}

\begin{figure}[H]
 \centering
   \includegraphics[scale=0.382]{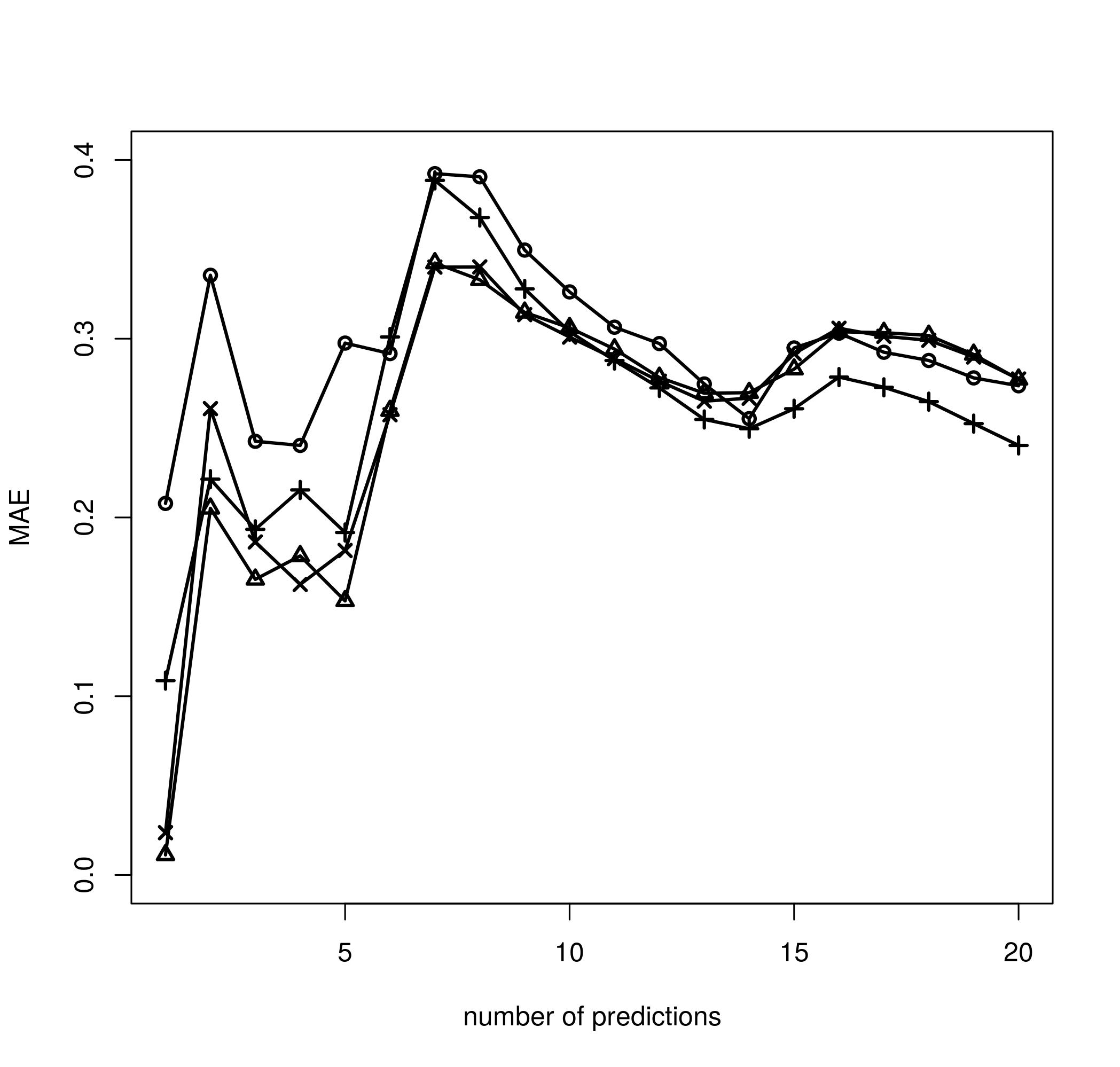} 
   \includegraphics[scale=0.382]{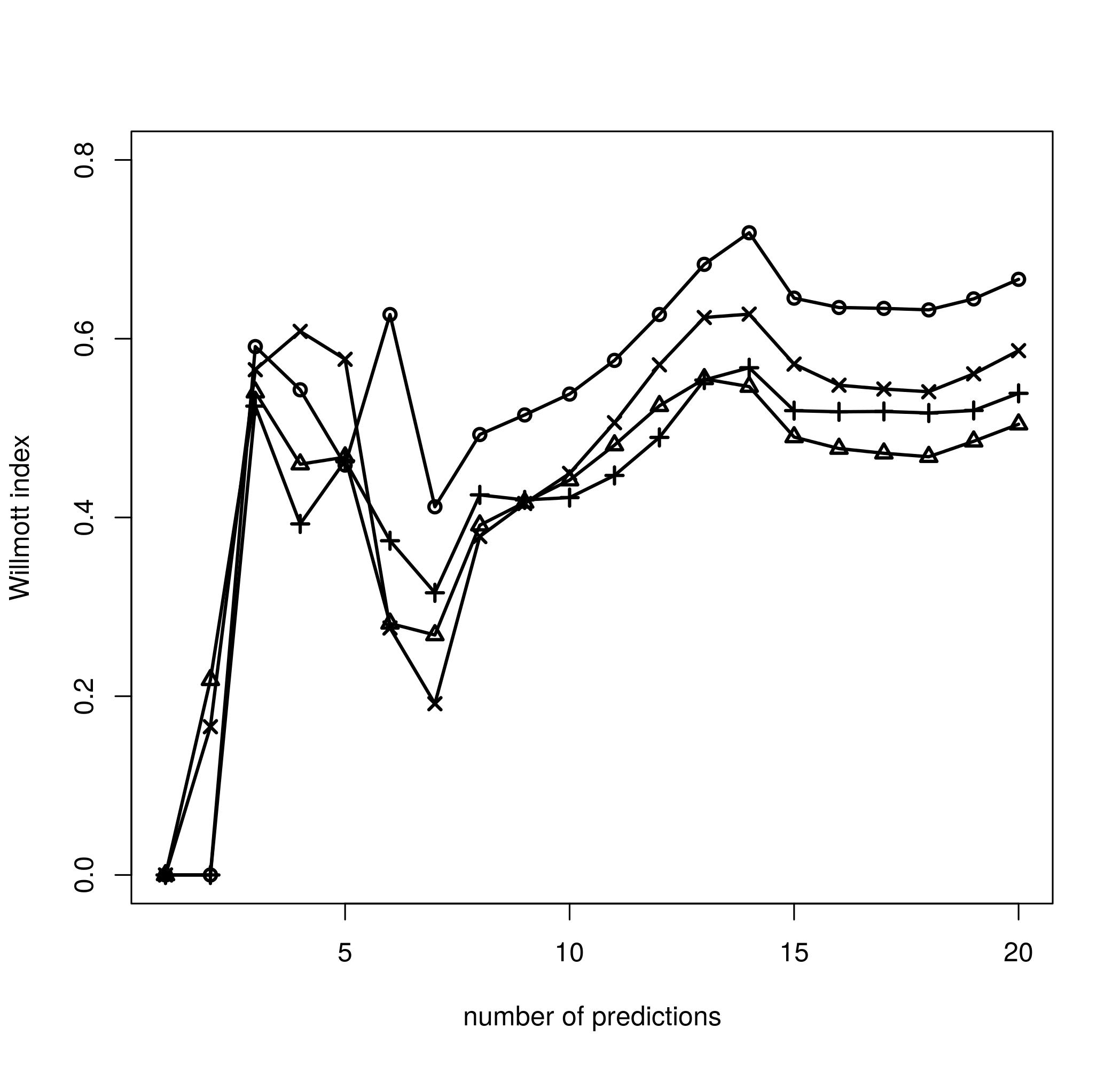} 
   \caption{FOU$(\lambda_{1},\lambda_{2})$ ($\circ$),  FOU$(\lambda^{(2)})$ ($\times$),  ARMA$(1,1)$ ($ \triangle$), 
    AR$(7)$ ($+$). MAE for the last $m=1,2,3,...,20$ observations for the four models, and Willmott values in 
   seriesA.} 
  \label{locon}
\end{figure}

 Again, like the oxygen saturation in blood data set, 
we see that the graph of predictions generated by FOU$(2)$ with $\lambda_{1}\neq\lambda_{2}$ model, have a shape more similar
to the observed curve  than those generated by the other models.
In Table 2, we show the values of $d$, $RMSE$, $d_{1}$  and $MAE$ for AR$(7)$, ARMA$(1,1)$ and different FOU$(p)$ for $p=2,3,4$ 
models. 
We see that
$\text{FOU}\left( \lambda _{1},\lambda _{2},\sigma ,H\right)$ model achieves a 23\% improvement
over the AR$(7)$ model in Willmott $d$ index, 11 \% in Willmott $d_{1}$ index, but has a loss of 14 \% in $MAE$, and 8 \% in $RMSE$.
We also observe the good behavior of  $\text{FOU}\left( \lambda ^{\left( 4\right) },\sigma ,H\right)$. The models
 FOU$(2)$, FOU$(3)$ and FOU$(4)$ for pairwise different  values of $\lambda_{i}$, are performing worse as the number of
 parameters increases in terms of $MAE$ and $RMSE$.
 
\noindent 
\begin{center}
 
Table 2. Values of $d$, $RMSE$, $d_{1}$  and $MAE$ for different models adjusted to Series A.

\fbox{$%
\begin{array}{c|cccc}
\text{Model} & d & RMSE & d_{1} & MAE \\ 
\hline
\text{AR}\left( 7\right) & 0.5389 & 0.3482 & 0.4690 & 0.2403 \\ 
\text{ARMA}\left( 1,1\right) & 0.5043 & 0.3615 & 0.4175 & 0.2773 \\ 
\text{FOU}\left( \lambda _{1},\lambda _{2},\sigma ,H\right) & 0.6665 & 0.3788
& 0.5215 & 0.2737 \\ 
\text{FOU}\left( \lambda _{1},\lambda _{2},\lambda _{3},\sigma ,H\right) & 
0.6452 & 0.4462 & 0.4940 & 0.3204 \\ 
\text{FOU}\left( \lambda _{1},\lambda _{2},\lambda _{3},\lambda _{4},\sigma
,H\right) & 0.6509 & 0.4763 & 0.4839 & 0.3439 \\ 
\text{FOU}\left( \lambda ^{\left( 2\right) },\sigma ,H\right) & 0.5866 & 
0.3574 & 0.4534 & 0.2774 \\ 
\text{FOU}\left( \lambda ^{\left( 3\right) },\sigma ,H\right) & 0.6401 & 
0.4339 & 0.4895 & 0.3141 \\ 
\text{FOU}\left( \lambda ^{\left( 4\right) },\sigma ,H\right) & 0.6662 & 
0.3796 & 0.5313 & 0.2743%
\end{array}%
$}
\end{center}

\subsection{Level in feet of Lake Huron}

Level in feet of years 1875 to 1972, is a time series of $98$ observations.

\begin{figure}[H]
 \centering
   \includegraphics[scale=0.49]{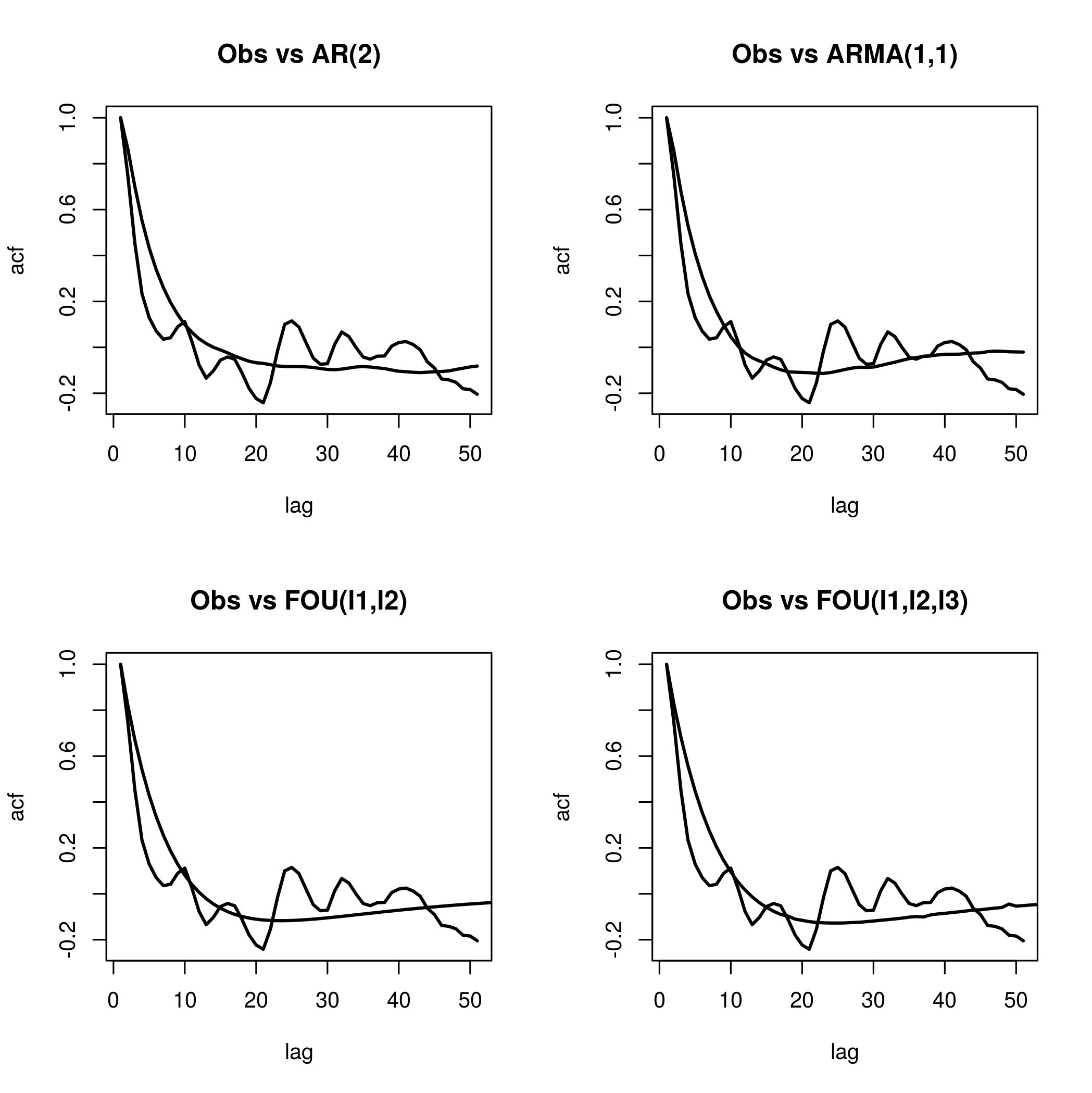} 
   \caption{Empirical auto-covariances vs fitted auto-covariances according to the adjusted model for Lake Huron data set.} 
  \label{acfHuron}
\end{figure}

\begin{figure}[H]
 \centering
   \includegraphics[scale=0.49]{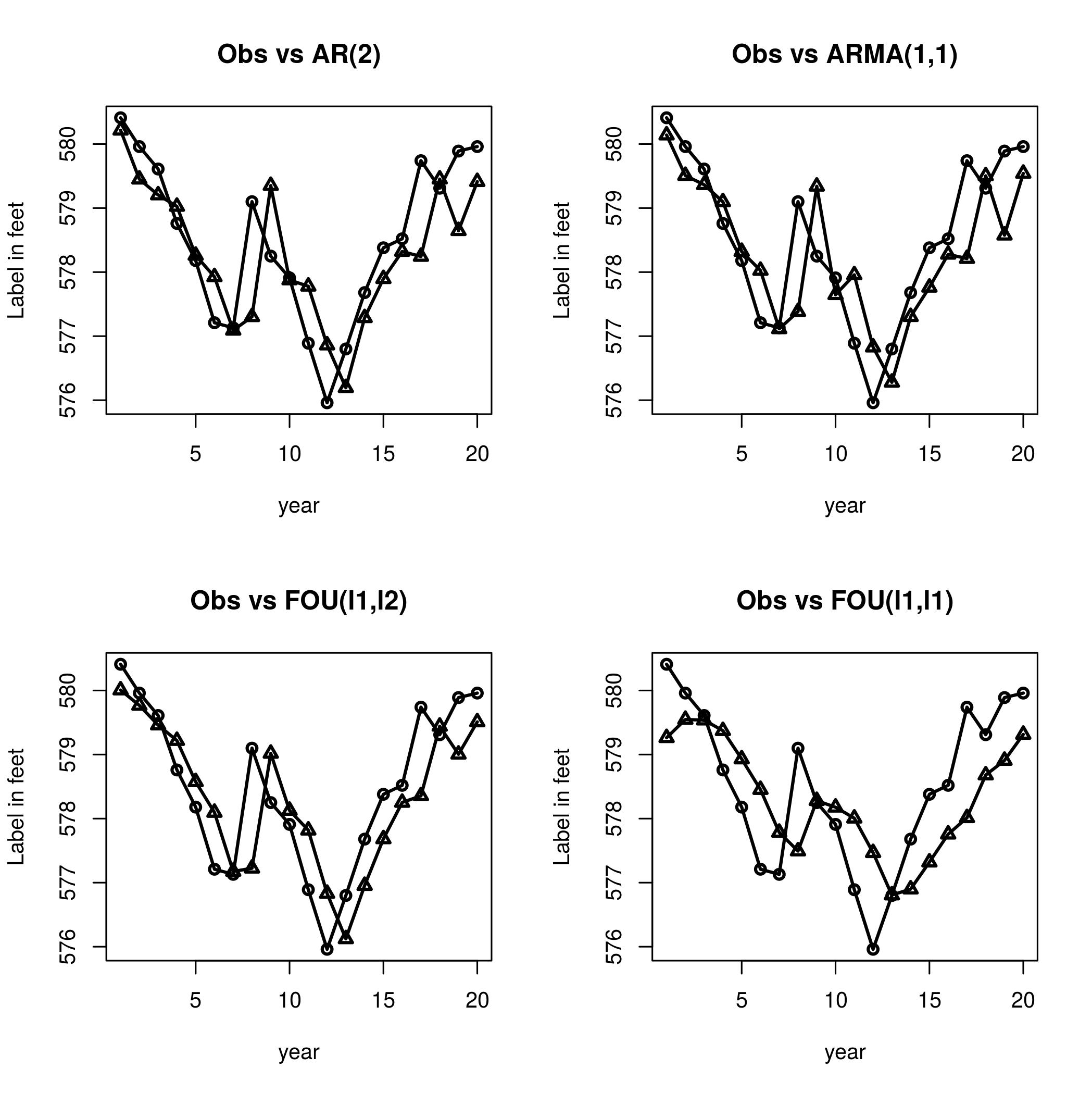} 
   \caption{Last 20 observed values (o) and your corresponding predictions ($ \triangle $) according to te model for Lake Huron
   data set.} 
  \label{Huronobs2a2}
\end{figure}

\begin{figure}[H]
 \centering
   \includegraphics[scale=0.382]{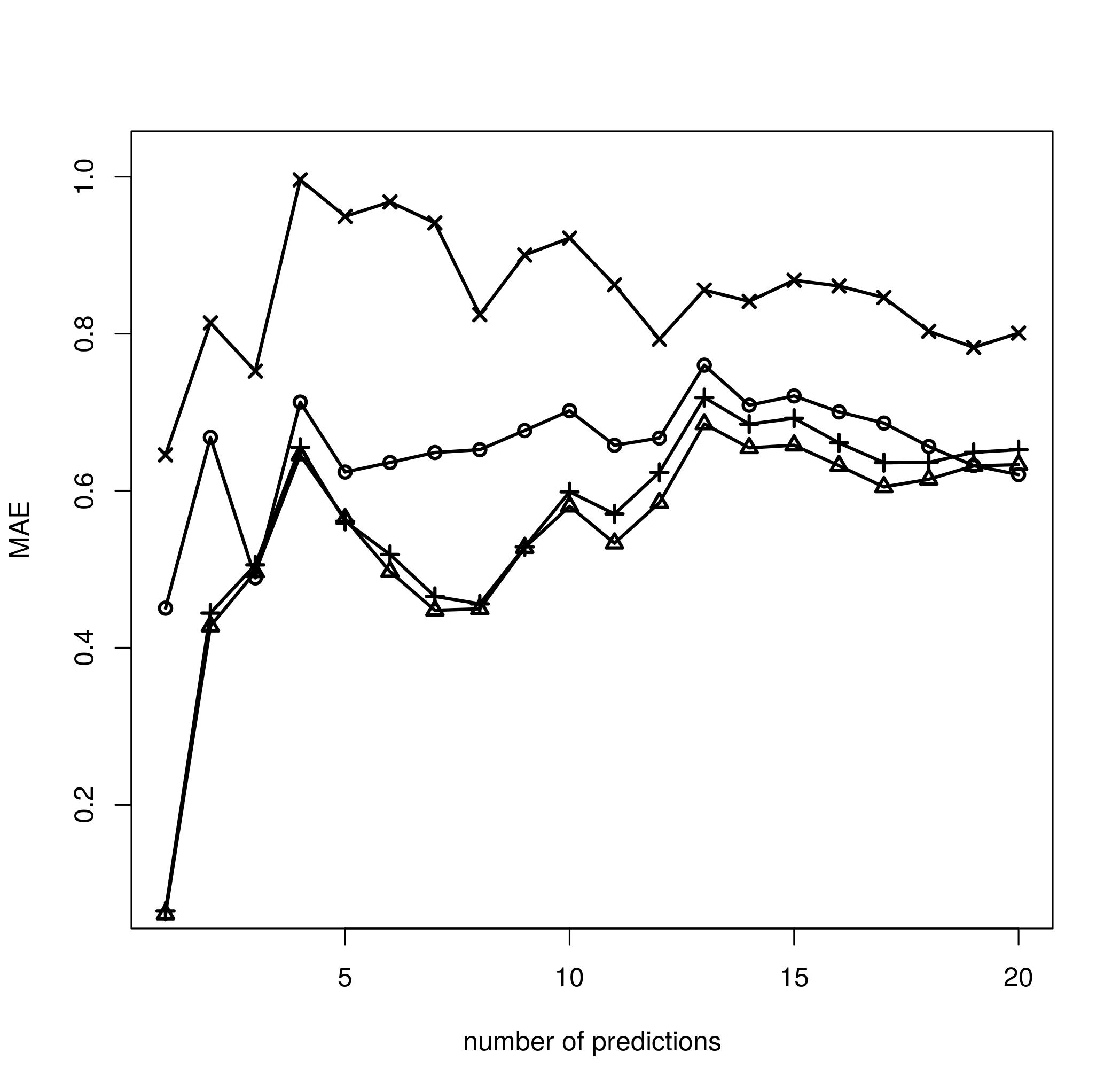} 
   \includegraphics[scale=0.382]{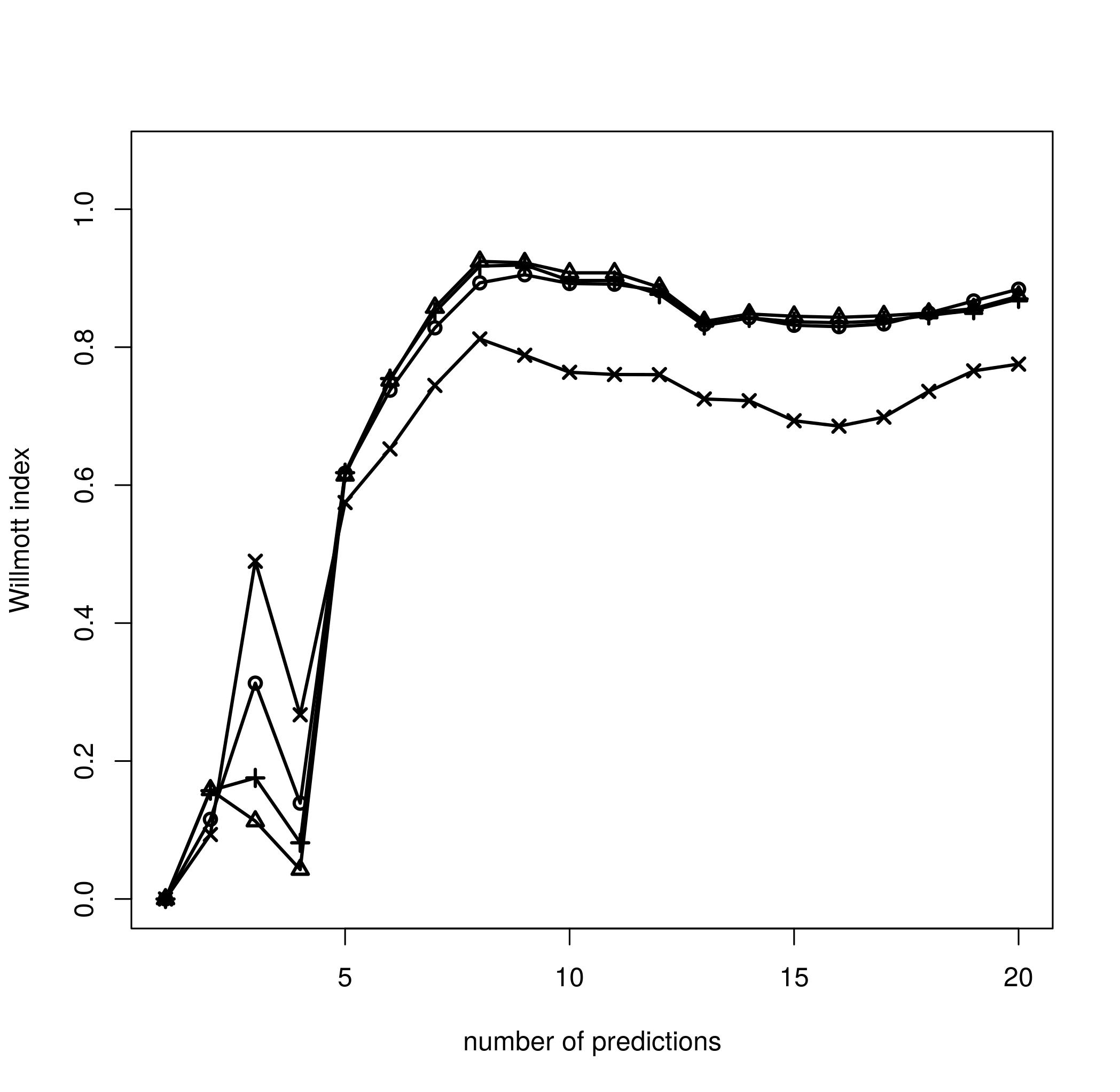} 
\caption{FOU$(\lambda_{1},\lambda_{2})$ ($\circ$),  FOU$(\lambda^{(2)})$ ($\times$),  ARMA$(1,1)$ ($ +$), 
    AR$(2)$ ($\triangle$). MAE for the last $m=1,2,3,...,20$ observations for the four models, and Willmott values for 
   Lake Huron data set.} 
  \label{locon}
\end{figure}
The series has a slight tendency that was removed before
adjusting the models. In (Brockwell \& Davis, 2002), suggest an AR$\left(
2\right) $ and ARMA$\left( 1,1\right) $ to this series. 
In Figure \ref{acfHuron} we observe the auto-covariances of AR$(2)$, ARMA$(1,1)$ and two FOU$(2)$ adjusted  models. In 
this case there are no substantial differences between the different models adjusted.
Nor are there significant differences between the observed curve and the predictions curve for the the different models in the last 20 
observations (Figure \ref{Huronobs2a2}).
\begin{figure}[H]
 \centering
   \includegraphics[scale=0.45]{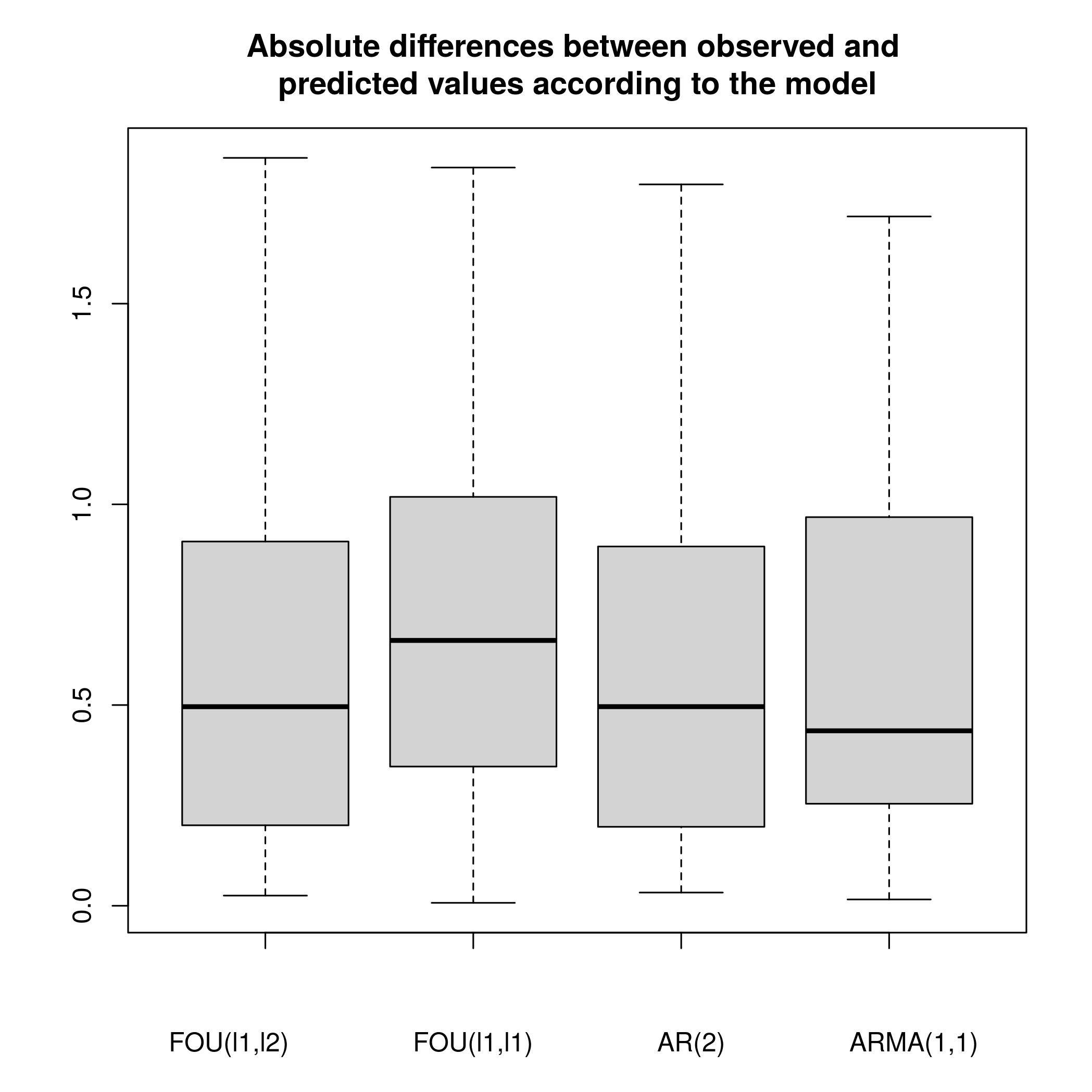} 
 \caption{MAE values according to the model.}  
  \label{loconnn}
\end{figure}

In Table 3, we show the values of $d$, $RMSE$, $d_{1}$  and $MAE$, for adjusted AR$(2)$, ARMA$(1,1)$ and different FOU$(p)$ for $p=2,3,4$
models.

We see that the performance of all models considered are similar (except $\text{FOU}\left( \lambda^{(3)},\sigma ,H\right)$ 
and FOU$(\lambda^{(2)}, \sigma, H)$ models).
Anyway, we see that 
$\text{FOU}\left( \lambda _{1},\lambda _{2},\lambda_{3},\lambda_{4},\sigma ,H\right)$ model shows a slightly better results.

\begin{center}
 Table 3. Values of  $d$, $RMSE$, $d_{1}$  and $MAE$ for different models, adjusted to the series ``level on feet, Lake Huron''.

\fbox{$%
\begin{array}{c|cccc}
\text{Model} & d & RMSE & d_{1} & MAE \\ 
\hline
\text{AR}\left( 2\right) & 0.8739 & 0.7891 & 0.6735 & 0.6331 \\ 
\text{ARMA}\left( 1,1\right) & 0.8700 & 0.7994 & 0.66655 & 0.6523 \\ 
\text{FOU}\left( \lambda _{1},\lambda _{2},\sigma ,H\right) & 0.8841 & 0.7620
& 0.6833 & 0.6205 \\ 
\text{FOU}\left( \lambda _{1},\lambda _{2},\lambda _{3},\sigma ,H\right) & 
0.8872 & 0.7998 & 0.7023 & 0.6169 \\ 
\text{FOU}\left( \lambda _{1},\lambda _{2},\lambda _{3},\lambda _{4},\sigma
,H\right) & 0.8963 & 0.7837 & 0.7197 & 0.5938 \\ 
\text{FOU}\left( \lambda ^{\left( 2\right) },\sigma ,H\right) & 0.7755 & 
0.9394 & 0.5538 & 0.8007 \\ 
\text{FOU}\left( \lambda ^{\left( 3\right) },\sigma ,H\right) & 0.5915 & 
2.1090 & 0.5818 & 1.1678 \\ 
\text{FOU}\left( \lambda ^{\left( 4\right) },\sigma ,H\right) & 0.8752 & 
0.7755 & 0.6731 & 0.6294%
\end{array}%
$}
\end{center}

We see that FOU$(\lambda_{1},\lambda_{2},\lambda_{3},\lambda_{4})$ achieves the best results in $d$, $d_{1}$ and $MAE$, 
but the results for FOU$(\lambda^{(4)})$ are very similars and has fewer parameters. In $RSME$ the best result is obtained in 
FOU$(\lambda^{(4)})$ model.
\section{Proofs}

\begin{proof}[Proof of Proposition 1]
\begin{enumerate}
\item It is enough to prove that 
\begin{equation*}
\frac{-\alpha ^{1-2H}\int_{0}^{\alpha x}e^{s}s^{2H-1}ds+\beta
^{1-2H}e^{\left( \alpha +\beta \right) x}\int_{\beta x}^{+\infty
}e^{-s}s^{2H-1}ds}{e^{\alpha x}}\rightarrow 0.
\end{equation*}

We apply L'H\^{o}pital rule two times and we obtain that 
\begin{equation*}
\underset{x\rightarrow +\infty }{\text{lim}}\frac{\alpha +\beta }{\alpha }%
\frac{-e^{-\beta x}x^{2H-1}+\beta ^{1-2H}\int_{\beta x}^{+\infty
}e^{-s}s^{2H-1}ds}{e^{-\beta x}}=
\end{equation*}%
\begin{equation*}
\underset{x\rightarrow +\infty }{\text{lim}}\frac{\alpha +\beta }{\alpha
\beta }\left( 2H-1\right) x^{2H-2}\rightarrow 0.
\end{equation*}

\item Due $\alpha ^{1-2H}f_{H}^{\left( 1\right) }(\alpha x)+\beta
^{1-2H}f_{H}^{\left( 2\right) }(\beta x)\rightarrow 0$ and observe that $%
e^{-x}/x^{2-2H}\rightarrow 0$ as $x\rightarrow +\infty ,$ we apply L'H\^{o}%
pital rule and get 
\begin{equation*}
\underset{x\rightarrow +\infty }{\text{lim}}\frac{\alpha
^{1-2H}f_{H}^{\left( 1\right) }(\alpha x)+\beta ^{1-2H}f_{H}^{\left(
2\right) }(\beta x)}{x^{2H-2}}=
\end{equation*}%
\begin{equation*}
\underset{x\rightarrow +\infty }{\text{lim}}\frac{\beta ^{1-2H}e^{\left(
\alpha +\beta \right) x}\int_{\beta x}^{+\infty }e^{-s}s^{2H-1}ds-\alpha
^{1-2H}\int_{0}^{\alpha x}e^{s}s^{2H-1}ds}{e^{\alpha x}x^{2H-2}}=
\end{equation*}

\begin{equation*}
\underset{x\rightarrow +\infty }{\text{lim}}\frac{\alpha +\beta }{\alpha }%
\frac{-x^{2H-1}+\beta ^{1-2H}e^{\beta x}\int_{\beta x}^{+\infty
}e^{-s}s^{2H-1}ds}{x^{2H-2}}=\frac{\alpha +\beta }{\alpha \beta }\left(
2H-1\right)
\end{equation*}%
where in the last equality was applied again the L'H\^{o}pital rule.

\item In property 2, put $\alpha =\beta =\lambda $, and we get that 
\begin{equation*}
f_{H}(\lambda x)=f_{H}^{\left( 1\right) }(\lambda x)+f_{H}^{\left( 2\right)
}(\lambda x)\sim 2\left( 2H-1\right) \left( \lambda x\right) ^{2H-2}
\end{equation*}%
where $x\rightarrow +\infty .$

\item 
\begin{equation*}
f_{H}(x)-f_{H}(0)=f_{H}(x)-2\Gamma \left( 2H\right) =
\end{equation*}%
\begin{equation*}
\Gamma \left( 2H\right) \left( e^{x}+e^{-x}-2\right)
-e^{-x}\int_{0}^{x}e^{s}s^{2H-1}ds-e^{x}\int_{0}^{x}e^{-s}s^{2H-1}ds=
\end{equation*}%
\begin{equation*}
o\left( x^{2H}\right) -e^{x}\sum_{n=0}^{+\infty }\frac{x^{n+2H}}{n!(n+2H)}%
-e^{-x}\sum_{n=0}^{+\infty }\frac{\left( -1\right) ^{n}x^{n+2H}}{n!(n+2H)}%
=o\left( x^{2H}\right) -\frac{x^{2H}}{H}.
\end{equation*}

\item (Pipiras \& Taqqu, 2000), shown that if $X_{t}\sim $%
FOU$\left( \lambda ,\sigma ,H\right) ,$ then 
\begin{equation*}
\rho \left( t\right) =\mathbb{E}\left( X_{0}X_{t}\right) =\frac{\sigma
^{2}\Gamma \left( 2H+1\right) \sin \left( H\pi \right) }{2\pi }\int_{-\infty
}^{+\infty }\frac{e^{itx}\left\vert x\right\vert ^{1-2H}}{\lambda ^{2}+x^{2}}%
dx.
\end{equation*}%
But, due to (\ref{covfou}) $\rho \left( t\right) =\frac{\sigma
^{2}Hf_{H}\left( \lambda t\right) }{2\lambda ^{2H}}$, then we deduce that 
\begin{equation*}
f_{H}\left( \lambda t\right) =\frac{2\Gamma \left( 2H\right) \sin \left(
H\pi \right) \lambda ^{2H}}{H\pi }\int_{-\infty }^{+\infty }\frac{%
e^{itx}\left\vert x\right\vert ^{1-2H}}{\lambda ^{2}+x^{2}}dx.
\end{equation*}%
Finally, if we make the change of variable $x=\lambda v$ we obtain the
result.
\end{enumerate}
\end{proof}

\begin{proof}[Proof of Proposition 2]
\begin{equation*}
\mathbb{E}\left( X_{t}^{(1)}X_{s}^{(2)}\right) =\sigma ^{2}E\left(
\int_{-\infty }^{t}e^{-\lambda _{1}(t-u)}dB_{H}(u)\int_{-\infty
}^{s}e^{-\lambda _{2}(s-v)}dB_{H}(v)\right) .
\end{equation*}%
As $H>1/2,$ we can apply (\ref{pipiras}), then

\begin{equation*}
\mathbb{E}\left( X_{t}^{(1)}X_{s}^{(2)}\right) =\sigma
^{2}H(2H-1)\int_{-\infty }^{t}e^{-\lambda _{1}(t-u)}du\int_{-\infty
}^{s}e^{-\lambda _{2}(s-v)}\left\vert u-v\right\vert ^{2H-2}dv,
\end{equation*}%
now we make the change of variable: $w=t-u,$\ $z=s-v$ and we get that 
\begin{equation*}
\mathbb{E}\left( X_{t}^{(1)}X_{s}^{(2)}\right) =\sigma
^{2}H(2H-1)\int_{0}^{+\infty }e^{-\lambda _{1}w}dw\int_{0}^{+\infty
}e^{-\lambda _{2}z}\left\vert t-w+z-s\right\vert ^{2H-2}dz.
\end{equation*}%
Then, $\mathbb{E}\left( X_{t}^{(1)}X_{s}^{(2)}\right) $\ it depends on $t-s$,
so we only need to find a formula for $\mathbb{E}\left(
X_{0}^{(1)}X_{t}^{(2)}\right) .$ \[ \mathbb{E}\left(
X_{0}^{(1)}X_{t}^{(2)}\right) =\sigma ^{2}H(2H-1)\int_{0}^{+\infty
}dw\int_{0}^{+\infty }e^{-\lambda _{1}w-\lambda _{2}z}\left\vert
z-w-t\right\vert ^{2H-2}dz\] that after doing the change\ of variable $%
h=\lambda _{1}w+\lambda _{2}z$\ in the integral in $z$\ is equal to 
\begin{equation*}
\frac{\sigma ^{2}H(2H-1)}{\lambda _{2}}\int_{0}^{+\infty }dw\int_{\lambda
_{1}w}^{+\infty }e^{-h}\left\vert \frac{h-\lambda _{1}w}{\lambda _{2}}%
-w-t\right\vert ^{2H-2}dh=
\end{equation*}%
\begin{equation*}
\frac{\sigma ^{2}H(2H-1)}{\lambda _{2}^{2H-1}}\int_{0}^{+\infty
}dw\int_{\lambda _{1}w}^{+\infty }e^{-h}\left\vert h-\left( \lambda
_{1}+\lambda _{2}\right) w-\lambda _{2}t\right\vert ^{2H-2}dh=
\end{equation*}%
\begin{equation}
\frac{\sigma ^{2}H(2H-1)}{\lambda _{2}^{2H-1}}\int_{0}^{+\infty
}e^{-h}dh\int_{0}^{h/\lambda _{1}}\left\vert h-\left( \lambda _{1}+\lambda
_{2}\right) w-\lambda _{2}t\right\vert ^{2H-2}dw.  \label{2}
\end{equation}%
Now, we continue the calculus in the case $t\geq 0,$ and we separate in
zones according to the absolute value that apears in the last integral.
Then, we get \ that (\ref{2}) is equal to%
\begin{equation*}
\frac{\sigma ^{2}H(2H-1)}{\lambda _{2}^{2H-1}}\int_{0}^{\lambda
_{2}t}e^{-h}dh\int_{0}^{h/\lambda _{1}}\left( \left( \lambda _{1}+\lambda
_{2}\right) w+\lambda _{2}t-h\right) ^{2H-2}dw+
\end{equation*}%
\begin{equation*}
\frac{\sigma ^{2}H(2H-1)}{\lambda _{2}^{2H-1}}\int_{\lambda _{2}t}^{+\infty
}e^{-h}dh\int_{0}^{\frac{h-\lambda _{2}t}{\lambda _{1}+\lambda _{2}}}\left(
h-\left( \lambda _{1}+\lambda _{2}\right) w-\lambda _{2}t\right) ^{2H-2}dw+
\end{equation*}%
\begin{equation*}
\frac{\sigma ^{2}H(2H-1)}{\lambda _{2}^{2H-1}}\int_{\lambda _{2}t}^{+\infty
}e^{-h}dh\int_{\frac{h-\lambda _{2}t}{\lambda _{1}+\lambda _{2}}}^{h/\lambda
_{1}}\left( \left( \lambda _{1}+\lambda _{2}\right) w+\lambda _{2}t-h\right)
^{2H-2}dw.
\end{equation*}%
Now we make $s=\lambda _{2}t-h$ in the first summand and $s=h+\lambda _{1}t$%
\ in second, and we get%
\begin{equation*}
\frac{\sigma ^{2}H}{\lambda _{1}+\lambda _{2}}\left( e^{-\lambda
_{2}t}\Gamma \left( 2H\right) \lambda _{2}^{1-2H}-\lambda
_{2}^{1-2H}\int_{0}^{\lambda _{2}t}e^{-h}\left( \lambda _{2}t-h\right)
^{2H-1}dh\right) +
\end{equation*}%
\begin{equation*}
\frac{\sigma ^{2}H}{\lambda _{1}+\lambda _{2}}\lambda
_{1}^{1-2H}\int_{0}^{+\infty }e^{-h}\left( h+\lambda _{1}t\right) ^{2H-1}dh=
\end{equation*}%
\begin{equation*}
\frac{\sigma ^{2}H}{\lambda _{1}+\lambda _{2}}\left( e^{-\lambda
_{2}t}\Gamma \left( 2H\right) \lambda _{2}^{1-2H}-\lambda
_{2}^{1-2H}e^{-\lambda _{2}t}\int_{0}^{\lambda _{2}t}e^{s}s^{2H-1}ds\right) +
\end{equation*}%
\begin{equation*}
\frac{\sigma ^{2}H}{\lambda _{1}+\lambda _{2}}\lambda _{1}^{1-2H}e^{\lambda
_{1}t}\int_{\lambda _{1}t}^{+\infty }e^{-s}s^{2H-1}ds=
\end{equation*}%
\begin{equation*}
\frac{\sigma ^{2}H}{\lambda _{1}+\lambda _{2}}\left( \lambda
_{2}^{1-2H}f_{H}^{\left( 2\right) }\left( \lambda _{2}t\right) +\lambda
_{1}^{1-2H}\left[ e^{\lambda _{1}t}\Gamma \left( 2H\right) -e^{\lambda
_{1}t}\int_{0}^{\lambda _{1}t}e^{-s}s^{2H-1}ds\right] \right) =
\end{equation*}%
\begin{equation*}
\frac{\sigma ^{2}H}{\lambda _{1}+\lambda _{2}}\left( \lambda
_{2}^{1-2H}f_{H}^{\left( 2\right) }\left( \lambda _{2}t\right) +\lambda
_{1}^{1-2H}f_{H}^{\left( 1\right) }\left( \lambda _{1}t\right) \right) .
\end{equation*}%
In the case $t\leq 0,$ we work similarly.
\end{proof}

\begin{lemma}
\bigskip If $\lambda _{1},\lambda _{2},...,\lambda _{p}$ are positives reals
numbers, pairwise different, then%
\begin{equation*}
K_{i}+2\lambda _{i}\sum_{j\neq i}\frac{K_{j}}{\lambda _{i}+\lambda _{j}}=%
\frac{\lambda _{i}^{p-1}}{\prod\limits_{j\neq i}\left( \lambda _{i}+\lambda
_{j}\right) }\text{ for }i=1,2,3,...,p.
\end{equation*}
\end{lemma}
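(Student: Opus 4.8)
The plan is to recognize the numbers $K_i$ as the residues appearing in a partial-fraction expansion and then to evaluate that expansion at the single point $x=-\lambda_i$. This turns what looks like a sum of $p$ unrelated rational terms into one clean substitution.

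First I would simplify the definition (\ref{k_i}) in the present setting, where all $p$ parameters are distinct (so $q=p$). Since $\prod_{j\neq i}\left(1-\lambda_j/\lambda_i\right)=\lambda_i^{-(p-1)}\prod_{j\neq i}(\lambda_i-\lambda_j)$, we get
\[
K_i=\frac{\lambda_i^{p-1}}{\prod_{j\neq i}(\lambda_i-\lambda_j)}.
\]
These are exactly the residues of the rational function $x^{p-1}\big/\prod_{j=1}^{p}(x-\lambda_j)$ at its simple poles $x=\lambda_i$. Because the numerator has degree $p-1<p$ and the $\lambda_j$ are pairwise different, this function admits the partial-fraction decomposition
\[
\frac{x^{p-1}}{\prod_{j=1}^{p}(x-\lambda_j)}=\sum_{k=1}^{p}\frac{K_k}{x-\lambda_k},
\]
valid for every $x\notin\{\lambda_1,\dots,\lambda_p\}$.

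The decisive step is to substitute $x=-\lambda_i$. On the left I would factor $-1$ out of each denominator $-\lambda_i-\lambda_k$ and split off the $k=i$ term, obtaining $-\bigl(K_i/(2\lambda_i)+\sum_{k\neq i}K_k/(\lambda_i+\lambda_k)\bigr)$. On the right the numerator $(-\lambda_i)^{p-1}$ and the $p$ denominator factors $-(\lambda_i+\lambda_j)$ (one of which is $2\lambda_i$) are evaluated, and the powers of $-1$ collapse to an overall factor $-1$; the right-hand side becomes $-\lambda_i^{p-1}\big/\bigl(2\lambda_i\prod_{j\neq i}(\lambda_i+\lambda_j)\bigr)$. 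Equating the two sides and multiplying through by $-2\lambda_i$ produces exactly the asserted identity $K_i+2\lambda_i\sum_{j\neq i}K_j/(\lambda_i+\lambda_j)=\lambda_i^{p-1}\big/\prod_{j\neq i}(\lambda_i+\lambda_j)$.

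The only point requiring care is the sign bookkeeping: the factor $(-\lambda_i)^{p-1}$ in the numerator contributes $(-1)^{p-1}$ and the product of the $p$ factors $-(\lambda_i+\lambda_j)$ contributes $(-1)^{p}$, and one must verify that their quotient is $-1$ regardless of the parity of $p$; this is the main thing to get right, and it is where a direct computation would most easily slip. An alternative, entirely algebraic route would clear all denominators and check the resulting polynomial identity in the $\lambda$'s, but that is more laborious and far less transparent than evaluating the residue expansion at $-\lambda_i$.
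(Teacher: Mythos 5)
Your proof is correct, and it takes a genuinely different (and slicker) route than the paper's. Both arguments rest on partial fractions, but they decompose different functions. The paper reduces without loss of generality to $i=1$, treats $x=\lambda_1$ as a variable, rewrites the claim as an identity between rational functions of $x$, and proves that identity by expanding the difference $\frac{x^{p-2}}{\prod_{j\ge 2}(x-\lambda_j)}-\frac{x^{p-2}}{\prod_{j\ge 2}(x+\lambda_j)}$ over its $2(p-1)$ simple poles $\pm\lambda_j$, $j\ge 2$, then pairing the residues at $\lambda_j$ and $-\lambda_j$ and recognizing the resulting coefficients as the $K_j$ (viewed as functions of $x$). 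You instead expand the single function $\frac{x^{p-1}}{\prod_{j=1}^{p}(x-\lambda_j)}$, whose residues at its simple poles are exactly the $K_k$ (your simplification $\prod_{j\neq i}(1-\lambda_j/\lambda_i)=\lambda_i^{1-p}\prod_{j\neq i}(\lambda_i-\lambda_j)$ is right, and the degree count $p-1<p$ justifies the expansion), and then evaluate at the single reflected point $x=-\lambda_i$; your sign bookkeeping is also correct, since $(-1)^{p-1}/(-1)^{p}=-1$ independently of the parity of $p$, and multiplying through by $-2\lambda_i$ gives the lemma exactly. Your route buys symmetry (all indices $i$ are handled at once, with no reduction to $i=1$), brevity (one canonical expansion, one substitution), and transparency: the factors $\lambda_i-\lambda_j$ enter as poles while the factors $\lambda_i+\lambda_j$ arise from evaluating at $-\lambda_i$. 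The paper's longer computation establishes along the way a rational-function identity in the variable $\lambda_1$ rather than a single-point evaluation, but nothing in the rest of the paper exploits that extra generality, so your argument could replace it with no loss.
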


\begin{proof}
To obtain the result, it is enough\ to show that 
\begin{equation}
K_{1}+2\lambda _{1}\sum_{j=2}^{p}\frac{K_{j}}{\lambda _{i}+\lambda _{j}}=%
\frac{\lambda _{1}^{p-1}}{\prod\limits_{j=2}^{p}\left( \lambda _{1}+\lambda
_{j}\right) }.  \label{3}
\end{equation}%
Because, $K_{i}=\frac{\lambda _{i}^{p-1}}{\prod\limits_{j\neq i}^{p}\left(
\lambda _{i}-\lambda _{j}\right) },$ then (\ref{3}) is equal to 
\begin{equation*}
\frac{\lambda _{1}^{p-1}}{\prod\limits_{j=2}^{p}\left( \lambda _{1}-\lambda
_{j}\right) }+2\lambda _{1}\sum_{j=2}^{p}\frac{K_{j}}{\lambda _{1}+\lambda
_{j}}=\frac{\lambda _{1}^{p-1}}{\prod\limits_{j=2}^{p}\left( \lambda
_{1}+\lambda _{j}\right) }
\end{equation*}%
wich is equivalent to prove that (if we call $x=\lambda _{1}$) 
\begin{equation}
\frac{x^{p-2}}{\prod\limits_{j=2}^{p}\left( x-\lambda _{j}\right) }-\frac{%
x^{p-2}}{\prod\limits_{j=2}^{p}\left( x+\lambda _{j}\right) }%
=-2\sum_{j=2}^{p}\frac{K_{j}}{x+\lambda _{j}}.  \label{4}
\end{equation}%
In fact, we develop the quotient in simple fractions, and we obtain that (%
\ref{4}) is equal to 
\begin{equation*}
\frac{x^{p-2}}{\prod\limits_{j=2}^{p}\left( x-\lambda _{j}\right) }-\frac{%
x^{p-2}}{\prod\limits_{j=2}^{p}\left( x+\lambda _{j}\right) }%
=\sum_{i=2}^{p}\left( \frac{A_{i}}{x-\lambda _{i}}-\frac{B_{i}}{x+\lambda
_{i}}\right) =
\end{equation*}%
\begin{equation*}
\sum_{i=2}^{p}\left( \frac{\lambda _{i}^{p-2}}{\prod\limits_{j\neq i}\left(
\lambda _{i}-\lambda _{j}\right) }\frac{1}{x-\lambda _{i}}-\frac{\left(
-\lambda _{i}\right) ^{p-2}}{\prod\limits_{j\neq i}\left( \lambda
_{j}-\lambda _{i}\right) }\frac{1}{x+\lambda _{i}}\right) =
\end{equation*}%
\begin{equation*}
\sum_{i=2}^{p}\left( \frac{\lambda _{i}^{p-2}2\lambda _{i}}{%
\prod\limits_{j\neq i}\left( \lambda _{i}-\lambda _{j}\right) }\frac{1}{%
\left( x-\lambda _{i}\right) \left( x+\lambda _{i}\right) }\right) =
\end{equation*}%
\begin{equation*}
-2\sum_{i=2}^{p}\frac{\lambda _{i}^{p-1}}{\left( \lambda _{i}-x\right)
\prod\limits_{j\neq i}\left( \lambda _{i}-\lambda _{j}\right) }\frac{1}{%
\left( x+\lambda _{i}\right) }=-2\sum_{j=2}^{p}\frac{K_{j}}{x+\lambda _{j}}.
\end{equation*}
\end{proof}

\begin{lemma}
If $\lambda _{1},\lambda _{2},...,\lambda _{p}$ son positives real numbers,
pairwise different, then 
\begin{equation*}
\frac{x^{2p-2}}{\prod\limits_{i=1}^{p}\left( x^{2}-\lambda _{j}^{2}\right) }%
=\sum_{i=1}^{p}\frac{\lambda _{i}^{2p-2}}{\prod\limits_{j\neq i}\left(
\lambda _{i}^{2}-\lambda _{j}^{2}\right) }\frac{1}{x^{2}-\lambda _{i}^{2}}.
\end{equation*}
\end{lemma}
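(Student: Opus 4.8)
The plan is to treat this as a partial-fraction identity in the single variable $y:=x^{2}$. First I would observe that since $\lambda_{1},\dots,\lambda_{p}$ are pairwise different positive reals, their squares $\lambda_{1}^{2},\dots,\lambda_{p}^{2}$ are also pairwise different; hence the substitution $y=x^{2}$ turns the claim into
$$\frac{y^{p-1}}{\prod_{k=1}^{p}\left(y-\lambda_{k}^{2}\right)}=\sum_{i=1}^{p}\frac{\lambda_{i}^{2p-2}}{\prod_{j\neq i}\left(\lambda_{i}^{2}-\lambda_{j}^{2}\right)}\,\frac{1}{y-\lambda_{i}^{2}},$$
an identity between rational functions of $y$ with exactly $p$ distinct simple poles, located at $y=\lambda_{i}^{2}$.

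The left-hand side is a proper rational function, its numerator having degree $p-1$ strictly below the denominator degree $p$, so it admits a unique partial-fraction expansion of the form $\sum_{i=1}^{p}c_{i}/(y-\lambda_{i}^{2})$. I would then compute each coefficient by the cover-up method: multiplying by $(y-\lambda_{i}^{2})$ and letting $y\to\lambda_{i}^{2}$ cancels the $i$-th factor and yields
$$c_{i}=\frac{\left(\lambda_{i}^{2}\right)^{p-1}}{\prod_{j\neq i}\left(\lambda_{i}^{2}-\lambda_{j}^{2}\right)}=\frac{\lambda_{i}^{2p-2}}{\prod_{j\neq i}\left(\lambda_{i}^{2}-\lambda_{j}^{2}\right)},$$
which is precisely the coefficient appearing on the right-hand side. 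Substituting $y=x^{2}$ back then recovers the stated formula.

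Equivalently, and perhaps more transparently, one may clear denominators and read the identity as a statement about Lagrange interpolation: after multiplying through by $\prod_{k=1}^{p}(y-\lambda_{k}^{2})$, the right-hand side becomes $\sum_{i=1}^{p}\lambda_{i}^{2p-2}\prod_{k\neq i}(y-\lambda_{k}^{2})/\prod_{j\neq i}(\lambda_{i}^{2}-\lambda_{j}^{2})$, which is exactly the Lagrange interpolating polynomial of degree at most $p-1$ agreeing with $g(y)=y^{p-1}$ at the $p$ distinct nodes $\lambda_{1}^{2},\dots,\lambda_{p}^{2}$. Since $g$ is itself a polynomial of degree $p-1<p$, uniqueness of interpolation forces this interpolant to coincide with $g$ identically, which gives the claim.

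I do not expect a genuine obstacle here: the only thing to verify is that the nodes $\lambda_{i}^{2}$ are distinct, which is immediate from the hypothesis, and the substance is the standard uniqueness of partial-fraction decompositions, equivalently of polynomial interpolation at distinct nodes. The lemma is thus essentially a bookkeeping identity whose role is to supply the even-denominator decomposition needed to assemble the auto-covariance and spectral density of the \emph{FOU}$(p)$ process.
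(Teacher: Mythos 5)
Your proof is correct, and it rests on the same underlying tool as the paper's --- uniqueness of the partial-fraction expansion of a proper rational function with simple poles --- but the execution differs in a way worth noting. The paper works directly in the variable $x$: it factors each $x^{2}-\lambda_{j}^{2}$ as $(x-\lambda_{j})(x+\lambda_{j})$, expands over the $2p$ simple poles $\pm\lambda_{i}$, computes the two families of residues $A_{i}$ and $B_{i}$, and then recombines the conjugate pairs via $\frac{1}{x-\lambda_{i}}-\frac{1}{x+\lambda_{i}}=\frac{2\lambda_{i}}{x^{2}-\lambda_{i}^{2}}$ to land back on the denominators $x^{2}-\lambda_{i}^{2}$. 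Your substitution $y=x^{2}$ collapses this to a decomposition with only $p$ simple poles at $\lambda_{i}^{2}$, so the cover-up computation is done once rather than twice and no recombination step is needed; the only hypothesis you must (and do) check is that the nodes $\lambda_{i}^{2}$ remain pairwise distinct, which follows from the $\lambda_{i}$ being positive and pairwise different. Your alternative reading as Lagrange interpolation of $y^{p-1}$ at those $p$ nodes is equivalent but makes the uniqueness mechanism fully explicit. What the paper's route buys is uniformity with its Lemma 1, which is proved by the same expansion over the poles $\pm\lambda_{i}$ and genuinely needs them kept separate (its denominators are $x\pm\lambda_{j}$, not $x^{2}-\lambda_{j}^{2}$); what your route buys is brevity and a cleaner logical skeleton, since nothing about the even structure of the function is used beyond the change of variable.
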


\begin{proof}
We decompose in simple fractions, and we obtain 
\begin{equation*}
\frac{x^{2p-2}}{\prod\limits_{i=1}^{p}\left( x^{2}-\lambda _{j}^{2}\right) }%
=\frac{x^{2p-2}}{\prod\limits_{i=1}^{p}\left( x-\lambda _{j}\right) \left(
x+\lambda _{j}\right) }=\sum_{i=1}^{p}\left( \frac{A_{i}}{x-\lambda _{i}}+%
\frac{B_{i}}{x+\lambda _{i}}\right) =
\end{equation*}%
\begin{equation*}
\sum_{i=1}^{p}\left( \frac{\lambda _{i}^{2p-2}}{2\lambda _{i}\prod_{j\neq
i}\left( \lambda _{i}^{2}-\lambda _{j}^{2}\right) }\frac{1}{x-\lambda _{i}}-%
\frac{\lambda _{i}^{2p-2}}{2\lambda _{i}\prod_{j\neq i}\left( \lambda
_{i}^{2}-\lambda _{j}^{2}\right) }\frac{1}{x+\lambda _{i}}\right) =
\end{equation*}%
\begin{equation*}
\sum_{i=1}^{p}\frac{\lambda _{i}^{2p-2}}{2\lambda _{i}\prod_{j\neq i}\left(
\lambda _{i}^{2}-\lambda _{j}^{2}\right) }\left( \frac{1}{x-\lambda _{i}}-%
\frac{1}{x+\lambda _{i}}\right) =\sum_{i=1}^{p}\frac{\lambda _{i}^{2p-2}}{%
\prod_{j\neq i}\left( \lambda _{i}^{2}-\lambda _{j}^{2}\right) }\left( 
\frac{1}{x^{2}-\lambda _{i}^{2}}\right) .
\end{equation*}
\end{proof}

\begin{proof}[Proof of Proposition 3]
We start with (\ref{covgeneral}) in the case $p_{1}=p_{2}=......=p_{q}=1,$ 
\begin{equation*}
\gamma (t)=\mathbb{E}\left( X_{t}X_{0}\right)
=\sum_{i,j=1}^{q}K_{i}K_{j}\gamma _{\lambda _{h},\lambda _{h^{\prime
}}}^{\left( 0,0\right) }(t)
\end{equation*}%
and using (\ref{gamma}) 
\begin{equation*}
\gamma (t)=\sigma ^{2}H\sum_{i,j=1}^{q}K_{i}K_{j}\frac{\left( \lambda
_{i}^{1-2H}f_{H}^{\left( 1\right) }(\lambda _{i}t)+\lambda
_{j}^{1-2H}f_{H}^{\left( 2\right) }(\lambda _{j}t)\right) }{\lambda
_{i}+\lambda _{j}}=
\end{equation*}%
\begin{equation*}
\sigma ^{2}H\left( \sum_{i,j=1}^{p}\frac{K_{i}^{2}}{2\lambda _{i}}\lambda
_{i}^{1-2H}f_{H}(\lambda _{i}t)+\sum_{i=1}^{p}K_{i}\lambda
_{i}^{1-2H}f_{H}^{\left( 1\right) }(\lambda _{i}t)\sum_{j\neq i}\frac{K_{j}}{%
\lambda _{i}+\lambda _{j}}\right) +
\end{equation*}%
\begin{equation*}
\sigma ^{2}H\sum_{i=1}^{p}K_{i}\lambda _{i}^{1-2H}f_{H}^{\left( 2\right)
}(\lambda _{i}t)\sum_{j\neq i}\frac{K_{j}}{\lambda _{i}+\lambda _{j}}=
\end{equation*}%
\begin{equation*}
\sigma ^{2}H\sum_{i=1}^{p}K_{i}\lambda _{i}^{-2H}f_{H}(\lambda _{i}t)\left( 
\frac{K_{i}}{2}+\lambda _{i}\sum_{j\neq i}\frac{K_{j}}{\lambda _{i}+\lambda
_{j}}\right)
\end{equation*}%
Now, using Lemma 1, we obtain that the last expression is
equal to 
\begin{equation*}
\frac{\sigma ^{2}H}{2}\sum_{i=1}^{p}K_{i}\lambda _{i}^{-2H}f_{H}(\lambda
_{i}t)\frac{\lambda _{i}^{p-1}}{\prod\limits_{j\neq i}\left( \lambda
_{i}+\lambda _{j}\right) }=
\end{equation*}%
\begin{equation*}
\frac{\sigma ^{2}H}{2}\sum_{i=1}^{p}\frac{\lambda _{i}^{p-1}}{%
\prod\limits_{j\neq i}\left( \lambda _{i}-\lambda _{j}\right) }\lambda
_{i}^{-2H}f_{H}(\lambda _{i}t)\frac{\lambda _{i}^{p-1}}{\prod\limits_{j\neq
i}\left( \lambda _{i}+\lambda _{j}\right) }=
\end{equation*}%
\begin{equation*}
\frac{\sigma ^{2}H}{2}\sum_{i=1}^{p}\frac{\lambda _{i}^{2p-2H-2}}{%
\prod\limits_{j\neq i}\left( \lambda _{i}^{2}-\lambda _{j}^{2}\right) }%
f_{H}(\lambda _{i}t).
\end{equation*}
\end{proof}

\begin{lemma}
If $\lambda _{1},\lambda _{2},...,\lambda _{p}$ are positive reals numbers
pairwise different, then 
\begin{equation*}
\frac{x^{2p-2}}{\prod\limits_{i=1}^{p}\left( \lambda _{i}^{2}+x^{2}\right) }%
=\sum_{i=1}^{p}\frac{\lambda _{i}^{2p-2}}{\prod\limits_{j\neq i}\left(
\lambda _{i}^{2}-\lambda _{j}^{2}\right) }\frac{1}{\lambda _{i}^{2}+x^{2}}.
\end{equation*}
\end{lemma}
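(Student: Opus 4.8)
The plan is to read the identity as a partial-fraction decomposition in the single variable $y=x^{2}$. First I would rewrite the left-hand side as
\begin{equation*}
\frac{y^{p-1}}{\prod_{i=1}^{p}\left( \lambda _{i}^{2}+y\right) },
\end{equation*}
a proper rational function of $y$ (numerator of degree $p-1$, denominator of degree $p$) whose denominator has the $p$ \emph{distinct} roots $y=-\lambda _{i}^{2}$ by the pairwise-different hypothesis. Since these poles are all simple, the function has a unique expansion $\sum_{i=1}^{p}C_{i}/\left( \lambda _{i}^{2}+y\right)$, so it suffices to identify the residues $C_{i}$.

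To compute $C_{i}$ I would multiply through by $\left( \lambda _{i}^{2}+y\right)$ and let $y\rightarrow -\lambda _{i}^{2}$, which isolates
\begin{equation*}
C_{i}=\frac{\left( -\lambda _{i}^{2}\right) ^{p-1}}{\prod_{j\neq i}\left( \lambda _{j}^{2}-\lambda _{i}^{2}\right) }.
\end{equation*}
The only genuine work is the sign bookkeeping: using $\left( -\lambda _{i}^{2}\right) ^{p-1}=\left( -1\right) ^{p-1}\lambda _{i}^{2p-2}$ together with $\prod_{j\neq i}\left( \lambda _{j}^{2}-\lambda _{i}^{2}\right) =\left( -1\right) ^{p-1}\prod_{j\neq i}\left( \lambda _{i}^{2}-\lambda _{j}^{2}\right)$, the two factors $\left( -1\right) ^{p-1}$ cancel and leave $C_{i}=\lambda _{i}^{2p-2}/\prod_{j\neq i}\left( \lambda _{i}^{2}-\lambda _{j}^{2}\right)$, which is exactly the claimed coefficient. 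Substituting back $y=x^{2}$ completes the argument.

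Alternatively, one may observe that the statement is nothing but the previous Lemma 3 after the replacement $x^{2}\mapsto -x^{2}$ (equivalently $x\mapsto ix$): under this substitution the left-hand side and every summand on the right of Lemma 3 each pick up the same factor $-1$, so the two sign changes cancel and the identity of Lemma 3 transforms verbatim into the one asserted here. Because both sides of Lemma 3 are rational functions of $x^{2}$, this substitution is legitimate as an identity of rational functions. Either route is entirely routine and presents no real obstacle; the single point requiring care is tracking the powers of $-1$ so that the residue reduces to the stated form.
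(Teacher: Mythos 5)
Your proof is correct, but it follows a genuinely different route from the paper's. The paper proves this lemma by induction on $p$: the base case $p=1$ is declared evident, and in the inductive step it factors
\begin{equation*}
\frac{x^{2p}}{\prod_{i=1}^{p+1}\left( \lambda _{i}^{2}+x^{2}\right) }=\frac{x^{2p-2}}{\prod_{i=1}^{p}\left( \lambda _{i}^{2}+x^{2}\right) }\cdot \frac{x^{2}}{\lambda _{p+1}^{2}+x^{2}},
\end{equation*}
applies the induction hypothesis, splits each product $\frac{1}{\lambda _{i}^{2}+x^{2}}\cdot \frac{x^{2}}{\lambda _{p+1}^{2}+x^{2}}$ as $\frac{1}{\lambda _{i}^{2}-\lambda _{p+1}^{2}}\left( \frac{\lambda _{i}^{2}}{\lambda _{i}^{2}+x^{2}}-\frac{\lambda _{p+1}^{2}}{\lambda _{p+1}^{2}+x^{2}}\right)$, and then identifies the accumulated coefficient of $1/\left( \lambda _{p+1}^{2}+x^{2}\right)$ by invoking the paper's Lemma 2 (the companion identity with denominators $x^{2}-\lambda _{j}^{2}$) evaluated at the single point $x=\lambda _{p+1}$. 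Your first argument replaces that induction entirely: after the substitution $y=x^{2}$ the claim is the standard partial-fraction expansion of the proper rational function $y^{p-1}/\prod_{i}\left( \lambda _{i}^{2}+y\right)$ with $p$ distinct simple poles, and the coefficients fall out of a one-line residue computation plus the sign bookkeeping you did; this is shorter, self-contained, and needs no auxiliary lemma. Your alternative argument is closer in spirit to the paper, since it also rests on the earlier lemma, but it uses that lemma globally --- via the substitution $x^{2}\mapsto -x^{2}$, which is legitimate because both sides are rational functions of $x^{2}$ --- rather than evaluating it at one point inside an induction, so the present lemma becomes an immediate corollary instead of requiring its own inductive proof. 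One small slip in bookkeeping: the earlier lemma you appeal to is Lemma 2 in the paper's numbering (the statement under discussion is itself Lemma 3).
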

\begin{proof}

We will proceed by induction in $p.$ For $p=1,$ the equality is evident.
Supose that the equality holds por $p.$ Then, calculate 
\begin{equation}
\frac{x^{2p}}{\prod\limits_{i=1}^{p+1}\left( \lambda _{i}^{2}+x^{2}\right) }%
=\frac{x^{2p-2}}{\prod\limits_{i=1}^{p}\left( \lambda _{i}^{2}+x^{2}\right) 
}\frac{x^{2}}{\lambda _{p+1}^{2}+x^{2}}  \label{p}
\end{equation}
applying the hypothesis of induction, we deduce that( \ref{p}) is equal to 
\begin{equation*}
\sum_{i=1}^{p}\frac{\lambda _{i}^{2p-2}}{\prod\limits_{j\neq i}\left(
\lambda _{i}^{2}-\lambda _{j}^{2}\right) }\frac{1}{\lambda _{i}^{2}+x^{2}}%
\frac{x^{2}}{\lambda _{p+1}^{2}+x^{2}}=
\end{equation*}%
\begin{equation*}
\sum_{i=1}^{p}\frac{\lambda _{i}^{2p-2}}{\left( \lambda _{i}^{2}-\lambda
_{p+1}^{2}\right) \prod\limits_{j\neq i}^{p}\left( \lambda _{i}^{2}-\lambda
_{j}^{2}\right) }\left( \frac{\lambda _{i}^{2}}{\lambda _{i}^{2}+x^{2}}-%
\frac{\lambda _{p+1}^{2}}{\lambda _{p+1}^{2}+x^{2}}\right) =
\end{equation*}%
\begin{equation*}
\sum_{i=1}^{p}\frac{\lambda _{i}^{2p-2}}{\left( \lambda _{i}^{2}-\lambda
_{p+1}^{2}\right) \prod\limits_{j\neq i}^{p}\left( \lambda _{i}^{2}-\lambda
_{j}^{2}\right) }\left( \frac{\lambda _{i}^{2}}{\lambda _{i}^{2}+x^{2}}-%
\frac{\lambda _{p+1}^{2}}{\lambda _{p+1}^{2}+x^{2}}\right) =
\end{equation*}%
\begin{equation}
\sum_{i=1}^{p}\frac{\lambda _{i}^{2p}}{\prod\limits_{j\neq i}^{p+1}\left(
\lambda _{i}^{2}-\lambda _{j}^{2}\right) }\frac{1}{\lambda _{i}^{2}+x^{2}}%
-\sum_{i=1}^{p}\frac{\lambda _{p+1}^{2}\lambda _{i}^{2p-2}}{%
\prod\limits_{j\neq i}^{p+1}\left( \lambda _{i}^{2}-\lambda _{j}^{2}\right) 
}\frac{1}{\lambda _{p+1}^{2}+x^{2}}.  \label{induc}
\end{equation}%
Now, using Lemma 2 with $x=\lambda _{p+1}$, we obtain that 
\begin{equation*}
\sum_{i=1}^{p}\frac{\lambda _{i}^{2p-2}}{\prod\limits_{j\neq i}^{p+1}\left(
\lambda _{i}^{2}-\lambda _{j}^{2}\right) }=-\frac{\lambda _{p+1}^{2p-2}}{%
\prod\limits_{j=1}^{p}\left( \lambda _{p+1}^{2}-\lambda _{j}^{2}\right) }
\end{equation*}%
and then (\ref{induc}) is equal to 
\begin{equation*}
\sum_{i=1}^{p+1}\frac{\lambda _{i}^{2p}}{\prod\limits_{j\neq i}^{p+1}\left(
\lambda _{i}^{2}-\lambda _{j}^{2}\right) }\frac{1}{\lambda _{i}^{2}+x^{2}}.
\end{equation*}
\end{proof}

\begin{proof}[Proof of Theorem 2]
First, we will prove the result for the case in wich
$\left\{ X_{t}\right\}_{t\in \mathbb{R}} \sim $ FOU$\left( \lambda _{1},\lambda _{2},...,\lambda
_{p},\sigma ,H\right) .$ Using property (5) of $f_{H}$ in (\ref{covfoup}) and Lemma 3,
we obtain that $\mathbb{E}\left( X_{0}X_{t}\right) =$

\begin{equation*}
\frac{\sigma ^{2}H\Gamma (2H+1)\sin \left( H\pi \right) }{2\pi }%
\int_{-\infty }^{+\infty }e^{itx}\left\vert x\right\vert
^{1-2H}\sum_{i=1}^{p}\frac{\lambda _{i}^{2p-2}}{\prod\limits_{j\neq
i}\left( \lambda _{i}^{2}-\lambda _{j}^{2}\right) }\frac{1}{\lambda
_{i}^{2}+x^{2}}dx=
\end{equation*}
\begin{equation*}
\frac{\sigma ^{2}H\Gamma (2H+1)\sin \left( H\pi \right) }{2\pi }%
\int_{-\infty }^{+\infty }e^{itx}\frac{|x|^{2p-2H-1}}{\prod_{i=1}^{p}\left( \lambda _{i}^{2}+x^{2}\right )}dx,
\end{equation*}
then (\ref{espectraldistintos}) it holds. 

Now, \ in the general case in wich $\left\{ X_{t}\right\} _{t\in \mathbb{R}%
}\sim $FOU$\left( \lambda _{1}^{\left( p_{1}\right) },\lambda _{2}^{\left(
p_{2}\right) },...,\lambda _{q}^{\left( p_{q}\right) },\sigma ,H\right) $
where $p_{1}+p_{2}+...+p_{q}=p$, observe that the process $\left\{
X_{t}\right\} _{t\in \mathbb{R}}$ is puntual limit of the processes of $%
\left\{ X_{t}^{\left( n\right) }\right\} _{t\in \mathbb{R}}\sim $FOU$\left(
p\right) $ with parameters $\lambda _{1},\lambda _{1}+1/n,...,\lambda
_{1}+(p_{1}-1)/n,....,\lambda _{q},\lambda _{q}+1/n,...,\lambda
_{q}+(p_{q}-1)/n,\sigma ,H$. Also, the spectral density of $\left\{ X^{(n)}_{t}\right\} _{t\in \mathbb{R}%
} $ it is according to (\ref{espectraldistintos}). Now, using that the processes are Gaussian, we
deduce that $f^{\left( X^{\left( n\right) }\right) }(x)\rightarrow f^{\left(
X\right) }(x)$ for all $x$, then (\ref{espectral}) holds.
\end{proof}

\section{Conclusions}
In this work we have presented a Gaussian processes that arises from
the iteration of p fractional Ornstein-Uhlenbeck processes generated by the same fractional Brownian motion.
When the values of $\lambda_{i}$ are pairwise different, this iteration results in a particular linear combination  of each 
fractional Ornstein-Uhlenbeck process. We proved that when $H>1/2$ and $\lambda_{i}$ are pairwise
different, the auto-covariance function of the process can be expressed as a linear combination of auto-covariance function of each 
FOU$(\lambda_{i},\sigma,H)$.
We have obtained a explicit formula for the spectral density of the process that  allows us to deduce that,  
although every fractional Ornstein-Uhlenebeck process with $H>1/2$ is a long memory process, 
for $p\geq 2$ the iteration results in a short memory process.
We adjusted these processes to model three real time data sets, and compare their predictive performance with respect 
to ARMA models. In all three cases, similar or better performances were observed. They were observed even
the good performance of FOU$(\lambda^{(i)},\sigma,H)$ for different values of $i$ in each case.  To estimate the parameters of FOU$(p)$
we use a naive method that consists in matching correlations in a certain number of points.

\section{Acknowledgement}
I want to thank Enrique Cabaña and Jorge Graneri for their generosity and contribution of ideas for this work.

\section{References}

\begin{enumerate}

\item Arratia, A., Caba\~{n}a, A. \& Caba\~{n}a, E. ``A construction of Continuous
time ARMA models by iterations of Ornstein-Uhlenbeck process", \textit{SORT} Vol 40 (2) 267-302, 2016.

\item Box, G. E. P., Jenkins, G. M. \& Reinsel, G. C. ``Time Series Analysis,
Forecasting and Control", Prentice Hall, 1994.

\item Brockwell, P. J., Davies, R. A., ``Introduction to Time Series and
Forecasting", \textit{Springer}, 2002.

\item Cheridito, P., Kawaguchi, H. \& Maejima, M. ``Fractional Ornstein-Uhlenbeck
Processes". \textit{Electronic Journal of Probability}, 8(3): 1-14,2003.

\item Cleveland, W. S., ``The inverse autocorrelations of a time series and their
applicattions". Technometrics, 14: 277-298, 1971.

\item Langevin, P., ``Sur la te'eorie du mouvement brownien", \textit{C.R. Acad. Sci.} Par%
\'{\i}s 146, 530-533, 1908.

\item McLeod, A. I. \& Zang, Y. ``Partial autocorrelation parametrization for
subset autoregression". \textit{J of Time Series Analysis}, 27: 599-612, 2006.

\item Pipiras, V. and Taqqu, M. ``Integration questions related to fractional
Brownian motion", \textit{Prob. Th. Rel. Fields}, 118, 121-291,2000.

\item Uhlenbeck, G.E. \& Ornstein, L.S., ``On the theory of Brownian Motion";
\textit{Physical Review} 36, 823-841, 1930.

\item Willmott, C. J., ``Some comments of the evaluation of model performance".
\textit{Buletin of the American Meteorological Society}, 63, 1309-1313, 1982.

\end{enumerate}

\end{document}